\title[Strong wavefront lemma] {Strong wavefront lemma and counting
  lattice points in sectors} 
\dedicatory{Preliminary version as of \today} 
\author{Alexander Gorodnik, Hee Oh and Nimish Shah}
\address{Mathematics 253-37\\ Caltech\\Pasadena, CA 91106}
\email{gorodnik@caltech.edu} \address{Math Department\\151 Thayer
  St.\\Brown University\\Providence, RI 02912}
\email{heeoh@math.brown.edu} \address{School of Mathematics, TIFR\\ 1
  Homi Bhabha Road\\ Mumbai, 400005, India}
\email{nimish@math.tifr.res.in}
\thanks{The first and the second authors partially supported by NSF
  0400631 and NSF 0333397 respectively}
\theoremstyle{plain}
\newtheorem{thm}[equation]{Theorem}
\newtheorem{cor}[equation]{Corollary}
\newtheorem{Prop}[equation]{Proposition}
\newtheorem{Lem}[equation]{Lemma}
\newtheorem{lem}[equation]{Lemma}
\newtheorem{rem}[equation]{Remark}
\numberwithin{equation}{section}
\newcommand{\mc}[1]{{}}
\newcommand{\q}{\mathbb{Q}}
\newcommand{\e}{\varepsilon}
\newcommand{\z}{\mathbb{Z}}
\newcommand{\Z}{\z}
\renewcommand{\q}{\mathbb{Q}}
\newcommand{\n}{\mathbb{N}}
\newcommand{\br}{\mathbb{R}}
\newcommand{\R}{\br}
\newcommand{\N}{\n}
\newcommand{\inv}{^{-1}}
\newcommand{\cS}{\mathcal S}
\newcommand{\Cal}{\mathcal}
\newcommand{\abs}[1]{\left|#1\right|}
\newcommand{\norm}[1]{\left\|#1\right\|}
\newcommand{\cO}{{\mathcal{O}}}
\newcommand{\cQ}{{\mathcal{Q}}}
\newcommand{\supp}{\hbox{\rm supp}}
\newcommand{\mbq}{\text{\boldmath $q$}}
\newcommand{\vol}{\operatorname{Vol}}
\newcommand{\SL}{\operatorname{SL}}
\newcommand{\SO}{\operatorname{SO}}
\newcommand{\la}{\mathfrak}
\newcommand{\diag}{\operatorname{diag}}
\newcommand{\Ad}{\operatorname{Ad}}
\newcommand{\ad}{\operatorname{ad}}
\newcommand{\ignore} [1] {}
\newcommand{\cW}{{\mathcal{W}}}
\begin{document}

\begin{abstract}
  We compute the asymptotics of the number of integral quadratic forms
  with prescribed orthogonal decompositions and, more generally,
  the asymptotics of the number of lattice points lying in sectors of affine
  symmetric spaces. A new key ingredient in this article is the strong
  wavefront lemma, which shows that the generalized Cartan
  decomposition associated to a symmetric space is uniformly Lipschitz.
\end{abstract}

\maketitle

\section{Introduction}\label{sec:intro}

One of the motivations of this paper is a certain counting problem in
the space of quadratic forms. Let $\cS_W$ be the vector space of all
quadratic forms on a Euclidean space $W$ of dimension $d$. We fix an
integral structure on $W$, and hence on $\cS_W$. Let $\mathcal{Q}_W$
denote the subset of $\cS_W$ consisting of quadratic forms of
determinant $\pm 1$, and set $\cQ_W(\Z)=\cQ_W\cap \cS_W(\Z)$. Let
$\norm{\cdot}$ be any norm on $\cS_W$. It follows from the main result
of Duke, Rudnick and Sarnak \cite{drs}, as well as, Eskin and McMullen
\cite{em} that for $d\geq 3$ there exists a constant $c>0$ such that
\begin{equation} \label{eq:N_T-ball}
\#\{\mbq\in \mathcal{Q}_W(\mathbb{Z}):\, \|\mbq\|<T\}\sim_{T\to\infty}
c\cdot T^{d(d-1)/2}.
\end{equation}

Here we will consider a refinement of this
problem that concerns counting quadratic forms with prescribed
structure.  Fix an orthogonal decomposition
\begin{equation}\label{eq:W}
  W=\oplus_{i=0}^n W_i,
\end{equation}
and for $\Omega\subset \hbox{SO}(W)$ and $\Omega'\subset
\mathcal{Q}_{W_0}\times\cdots \times \mathcal{Q}_{W_n}$, set
\begin{equation} \label{eq:N_T}
N_T(\Omega,\Omega')=\#\left\{\mbq\in\mathcal{Q}_W(\mathbb{Z}):\,
  \begin{tabular}{l}
    $\|\mbq\|<T,$\\
    $\mbq(k\cdot x)=a_0 \mbq_0(x)+\dots + a_n \mbq_n(x)$\\
    for some $k\in \Omega$, $(\mbq_0,\dots,\mbq_n)\in \Omega'$,\\
    and $a_0>\dots>a_n>0$ 
  \end{tabular}
\right\}.
\end{equation}

For example, if we choose $W_i$'s to be one dimensional, then we are
counting the number of quadratic forms in a ball of radius $T$ which
can be diagonalized via conjugation by an element from a prescribed
set $\Omega$ of orthogonal transformations to obtain a form with distinct eigenvalues in
decreasing order of absolute values, and with prescribed sign ($\pm$)
in each diagonal entry.

Assuming that $\Omega$ and $\Omega'$ are bounded  measurable
sets such that the subset $\Omega\Omega'$
has positive measure and boundary of measure zero\footnote{The measure
  of $\Omega\Omega'$ is understood in terms of the identification
  \eqref{eq:mea} and \eqref{eq:dm}.}, we prove the following:

\begin{thm}\label{th:quad} For $d\geq 3$,
$$
N_T(\Omega,\Omega')\sim_{T\to\infty} c\cdot T^{d(d-\dim W_n)/2}
$$
for some $c=c(\Omega\Omega')>0$.
\end{thm}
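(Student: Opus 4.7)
The plan is to identify the counting with counting $\Gamma$-lattice points in a generalized Cartan sector of the symmetric variety $\cQ_W$, apply the sector counting theorem of the paper (which extends the Duke--Rudnick--Sarnak/Eskin--McMullen scheme from balls to sectors using the strong wavefront lemma), and evaluate the resulting volume. First, $\cQ_W(\R)$ is a finite union of $G$-orbits indexed by signature for $G = \SL(W)(\R)$, and $\cQ_W(\Z)$ is a finite union of $\Gamma$-orbits by Borel--Harish--Chandra for $\Gamma = \SL(W)(\Z)$, so it suffices to fix a base form $\mbq_* = \mbq_0^*\oplus\cdots\oplus\mbq_n^* \in \cQ_W(\Z)$ adapted to \eqref{eq:W} and count $\{\ga\in\Gamma/\Gamma\cap H : \ga\mbq_*\in B_T\}$, where $H = \SO(\mbq_*)$ and $B_T\subset G/H$ is the set carved out by the conditions of \eqref{eq:N_T}.

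Next, I reinterpret $B_T$ as a generalized Cartan cell. Writing the action as $S_\mbq = g S_{\mbq_*} g^T$, a short computation shows that $\mbq(kx)=\sum a_i\mbq_i(x)$ with $k\in\Omega$, $(\mbq_i)\in\Omega'$, $a_0>\cdots>a_n>0$ is equivalent to $g = k\cdot a\cdot h$ with $k\in\Omega$, $h\in H_{\Omega'}\subset L := \prod_i\SO(\mbq_i^*)$ a bounded set carrying $(\mbq_i^*)$ into $\Omega'$, and $a = \diag(e^{t_i}I_{W_i})$ lying in the Weyl chamber
$$
A^+ = \{(t_0,\ldots,t_n) : t_0 > \cdots > t_n,\ \textstyle\sum_i d_i t_i = 0\}, \qquad d_i = \dim W_i,
$$
where the theorem's $a_i$ equal $e^{2t_i}$. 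Consequently $B_T$ is the image in $G/H$ of $\Omega\cdot A^+_T\cdot H_{\Omega'}$, with $A^+_T = \{a\in A^+ : t_0 < \frac{1}{2}\log T + O(1)\}$ since $\|\mbq\|\asymp e^{2t_0}$. Invoking the sector counting theorem of the paper then yields $N_T(\Omega,\Omega')\sim\vol(B_T)/\vol(\Gamma\ba G)$.

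The sector counting theorem follows the standard Eskin--McMullen template: thicken $B_T$ by a small $G$-neighborhood $U_\e$, rewrite the lattice count as an integral against a smeared indicator, and invoke mixing of the $H$-action on $\Gamma\ba G$. The nontrivial input, and what I expect to be the main obstacle, is the strong wavefront lemma, which guarantees that the thickened set $B_T U_\e$ differs from $B_T$ by a set of relative volume $O(\e)$ \emph{uniformly} over $A^+_T$; the ordinary wavefront lemma would let this error blow up as one moves deep into the chamber, ruining the asymptotic. It remains to compute $\vol(B_T)$: Haar measure on $G$ decomposes under $G = K A^+ L$ as $dg = J(a)\,dk\,da\,dh$ with $J(a)\sim e^{2\rho(t)}$ and $2\rho(t) = \sum_{i<j} d_i d_j(t_i - t_j)$. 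Subject to $\sum d_i t_i = 0$ and $t_0\geq\cdots\geq t_n$, the maximum of $2\rho(t)$ on the boundary $\{t_0 = \frac{1}{2}\log T\}$ is attained on the facet $t_1 = \cdots = t_{n-1} = t_0$, $t_n = -(d-d_n)t_0/d_n$, where $2\rho(t) = d(d-d_n)t_0$. A direct estimate of the volume integral (the remaining $(n-1)$-dimensional integration over the simplex $u_k = t_{k-1}-t_k > 0$ is concentrated near this facet) then gives $\vol(B_T)\sim c\, T^{d(d-d_n)/2}$, completing the proof.
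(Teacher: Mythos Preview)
Your strategy is the paper's: reduce to a single $\Gamma$-orbit via Borel--Harish-Chandra, recognize the constraints of \eqref{eq:N_T} as defining a $KM_IA_I^+$-sector in $G/H$ with $G=\SL_d(\R)$ and $H=\SO(p,q)$, invoke Theorem~\ref{cor:counting}, and read off the exponent. But there is a real error in your identification of the factors. You write $h\in H_{\Omega'}\subset L:=\prod_i\SO(\mbq_i^*)$, yet elements of $\prod_i\SO(\mbq_i^*)$ \emph{fix} each $\mbq_i^*$ and so cannot carry $(\mbq_i^*)$ to a general $(\mbq_i)\in\Omega'$. The element doing that job lies in $M_I\simeq\prod_i\SL(W_i)$, and it is its class in $M_I/(M_I\cap H)\simeq\prod_i\cQ_{W_i}$ that must land in $\Omega'$. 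Consequently there is no decomposition ``$G=KA^+L$'' with your $L$ (the dimensions do not match); the relevant one is $G=KM_IA_I^+H$, and the measure you want is the one on $G/H$ in the coordinates $KM_I/(M_I\cap H)\times A_I^+$, namely $d\bar m\,\xi_I(a)\,da$ as in \eqref{eq:volume}. With that correction the set in question is exactly $S_T(\Omega\Omega',w)$ and Theorem~\ref{cor:counting} applies directly.

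Your Laplace-type heuristic for the volume locates the correct vertex and recovers the right exponent, but the paper's route is more systematic: it specializes the general formula $a_I=\max_{\alpha\in\Delta_\sigma\setminus I}u_\alpha/m_\alpha$ of Proposition~\ref{p:volume} to get $a_I=\max_k d\,i_k/2=d(d-\dim W_n)/2$ and checks $b_I=1$, so no logarithmic factor appears. One small aside: the ordinary wavefront lemma is already uniform in $g\in KA$; the reason it is insufficient here is not that errors ``blow up deep in the chamber'' but that it gives no separate control on the $K$- and $M_I$-components, which is precisely what is needed to show that a small $G$-perturbation of a point in $\Omega\Omega'\, A_I^+ v_0$ remains in a slightly enlarged sector.
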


Theorem~\ref{th:quad} is an example of our general result (Theorem
\ref{cor:counting}) on counting
lattice points in sectors of affine symmetric spaces.
In~\cite{drs,em} it is shown that the number of integral points in an
affine symmetric $\q$-variety in a sequence of growing subsets $S_T$
is asymptotic to the volume of $S_T$, provided the sets $S_T$ are {\it
  well-rounded}.  A family of subsets $S_T$ being well-rounded means
roughly that the volumes of neighborhoods of the boundaries of $S_T$
are uniformly negligible compared to the total volumes of $S_T$
(see \eqref{eq:well-rounded} for the precise condition).
In \cite{drs,em}, it is shown that the norm balls are well-rounded.
However, in most situations, given a sequence of subsets $S_T$ which
arises naturally in the geometric or number-theoretic contexts in the
category of affine symmetric spaces, it is highly non-trivial to
determine whether the family $S_T$ is well-rounded.

The main result of this paper is to show that {\em sectors}\/ in
affine symmetric spaces define a well-rounded family of growing
subsets, and consequently, we obtain the asymptotic counting of
lattice points in sectors.  The main technical lemma needed is what we
call `strong wave front lemma', a terminology reflecting it being a
stronger version of the wavefront lemma introduced by Eskin and
McMullen~\cite{em}.

Now we introduce notation that we use throughout the paper.
Let $G$ be a connected noncompact semisimple Lie group with finite center. A closed
subgroup $H$ of $G$ is called symmetric if its identity component
coincides the the identity component of the set of fixed points of an
involution, say $\sigma$, of $G$.  In this case, the homogeneous space
$G/H$ is called an {\em affine symmetric space}. Recall that a maximal
compact subgroup of $G$ is a symmetric subgroup associated to a Cartan
involution on $G$. Affine symmetric spaces have many features similar
to Riemannian symmetric spaces.  In particular, a generalized Cartan
decomposition holds:
\[
G=KAH
\]
where $K$ is a maximal compact subgroup of $G$ compatible with $H$,
and $A$ is a Cartan subgroup corresponding to the pair $(K,H)$.

More precisely, there exists a Cartan involution $\theta$ of $G$ which
commutes with $\sigma$, and let $K=\{g\in G:\theta(g)=g\}$, which is a
maximal compact subgroup of $G$. Let $\la{g}$, $\la{h}$, and $\la{k}$
denote the Lie algebras associated to $G$, $H$ and $K$,
respectively. Let $\theta$ and $\sigma$ also denote their
differentials on $\la{g}$.  Since $H$ and $K$ are $\theta$ stable, we
have the following orthogonal decomposition with respect to the
killing form on $\la{g}$: $\la{g}=\la{k}\oplus \la{p}$, and
$\la{g}=\la{h}\oplus \la{q}$, where $\la{p}$ and $\la{q}$ are the
$(-1)$-eigenspaces of $\theta$ and $\sigma$, respectively. Let
$\la{a}$ denote the maximal abelian subalgebra of $\la{p}\cap\la{q}$
which can be extended to a maximal abelian subalgebra, say $\la{b}$,
of $\la{p}$. Let $A$ denote the analytic subgroup of $G$ associated to
$\la{a}$. This $A$ is called the Cartan subgroup corresponding to the
symmetric pair $(K,H)$.

\bigskip\noindent{\bf Wavefront Lemma (Eskin and McMullen~\cite{em}).}
Given any neighborhood $\cO$ of $e$ in $G$, there exists a
neighborhood $\tilde\cO$ of $e$ in $G$ such that
\[
\tilde\cO g\subset g\cO H,\quad \forall g\in KA.
\]
\bigskip

Next we will strengthen this result for uniformly regular elements of
$g\in G$. For this we will need additional notation
(cf.~\cite[Ch.~7]{sch},\cite[Part~II]{hs} or \cite{gos}). Let
$\la{g}^\alpha$ denote a simultaneous eigenspace for $\ad\la{a}$
action on $\la{g}$ associated to the linear character
$\alpha\in\la{a}^\ast$. Let
$\Sigma_\sigma=\{\alpha\in\la{a}^\ast:\la{g}^\ast\neq0\}$. Then
$\la{g}=\sum_{\alpha\in\Sigma_\sigma\cup\{0\}}\la{g}^\alpha$, and
$\Sigma_\sigma$ forms a root system. Choose a closed positive Weyl
chamber $A^+\subset A$. Let $\Sigma_\sigma^+$ denote the set of
positive roots and $\Delta_\sigma$ the corresponding system of
positive simple roots. The associated Weyl group is given by
$\cW_\sigma=N_K(\la{a})/Z_K(\la{a})$. One can choose a set $\cW\subset
N_K(\la{a})\cap N_K(\la{b})$ of coset representatives of
$N_K(\la{a})/N_{K\cap H}(\la{a})Z_K(\la{a})$. Then 
\begin{equation}\label{eq:cartan0}
G=\cup_{w\in \cW} KA^+{w}H.
\end{equation}

For any $c>0$, an element $g=kawh\in KA^+\cW H$ will be called {\em
  $c$-regular\/} if $\alpha(\log a)\geq c$ for all
$\alpha\in\Delta_\sigma$; (here and later, our notation indicates that
$k\in K$, $a\in A^+$, $w\in\cW$, and $h\in H$).  Otherwise, we call
such an element {\em $c$-singular}.

We fix a Riemannian metric on $G$ and denote by $\mathcal{O}_\e$ the
$\e$-ball at identity.

\begin{thm}[Strong wavefront lemma-I]\label{th:sw}
  Given $c>0$, there exist $\ell>1$ and $\e_0>0$ such that for every
  $c$-regular $g=kawh\in KA^+{w}H$ and $0<\e<\e_0$,
$$
\Cal O_\e\cdot g \subset (K\cap \mathcal{O}_{\ell\e})k\cdot (A\cap
\mathcal{O}_{\ell\e})a\cdot w (H\cap \mathcal{O}_{\ell \e})h.
$$
\end{thm}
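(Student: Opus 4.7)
The plan is to reduce the statement to a uniform linear splitting at the Lie algebra level for $c$-regular $a$, and then lift the splitting to the group by a Newton-type iteration.

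\textbf{Linearization.} Writing $k_1 = \exp Y_K$, $a_1 = \exp Y_A$, $h_1 = \exp Y_H$ and computing the differential at $(e,e,e)$ of the map $\Phi\colon (k_1,a_1,h_1) \mapsto k_1k\,a_1a\,wh_1h$ (identifying the tangent space at $g$ with $\la g$ via right-translation), one finds
\[
d\Phi_{(e,e,e)}(Y_K, Y_A, Y_H) = Y_K + \Ad(k)(Y_A) + \Ad(kaw)(Y_H).
\]
Since $\Ad(k^{-1})$ is an isometry of $\la g$ preserving $\la k$, it suffices to prove the following linear statement: for every $c$-regular $a \in A^+$ and every $Y\in\la g$, there exist $Y_K\in\la k$, $Y_A\in\la a$, $Y_H\in\la h$ with $Y = Y_K + Y_A + \Ad(aw)(Y_H)$ and $\|Y_K\|, \|Y_A\|, \|Y_H\| \leq C(c)\|Y\|$, where $C(c)$ is independent of $a$.

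\textbf{Uniform linear splitting.} Decompose $\la g$ under $\ad\la a$ as $\la g^0 \oplus \bigoplus_{\alpha\in\Sigma_\sigma^+} \la g^{[\alpha]}$, where $\la g^{[\alpha]} := \la g^\alpha \oplus \la g^{-\alpha}$. Set $\la h' := \Ad(w)\la h$; this is the $+1$-eigenspace of $\sigma_w := \Ad(w)\sigma\Ad(w^{-1})$, and since $w\in N_K(\la a)$ both $\theta$ and $\sigma_w$ interchange $\la g^\alpha$ with $\la g^{-\alpha}$ and commute with each other. Parametrize $\la k\cap\la g^{[\alpha]} = \{X+\theta X : X\in\la g^\alpha\}$ and $\la h'\cap\la g^{[\alpha]} = \{X+\sigma_w X : X\in\la g^\alpha\}$. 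The requirement $Y_\alpha + Y_{-\alpha} = (X_1+\theta X_1) + \Ad(a)(X_2+\sigma_w X_2)$ then reduces, after a short computation, to
\[
\bigl(e^{\alpha(\log a)} - e^{-\alpha(\log a)}\theta\sigma_w\bigr) X_2 = Y_\alpha - \theta Y_{-\alpha}.
\]
With an inner product on $\la g$ for which $\theta$ and $\sigma_w$ are orthogonal, $\theta\sigma_w$ is an involution of operator norm $1$; hence $1 - e^{-2\alpha(\log a)}\theta\sigma_w$ is invertible with norm bounded by $(1-e^{-2c})^{-1}$, forcing $\|X_2\| \leq (1-e^{-2c})^{-1}e^{-\alpha(\log a)}\|Y_\alpha-\theta Y_{-\alpha}\|$ and thereby making both $\|X_1+\theta X_1\|$ and $\|\Ad(a)(X_2+\sigma_w X_2)\|$ uniformly bounded by a constant times $\|Y_\alpha+Y_{-\alpha}\|$. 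On $\la g^0$ the splitting $\la g^0 = (\la k\cap\la g^0) + \la a + (\la h'\cap\la g^0)$ is elementary (using $\la a = \la p\cap\la q\cap\la g^0$ by maximality) and has a bounded right-inverse independent of $a$.

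\textbf{Lifting to the group.} Let $R$ be a right-inverse of the linear map $\Psi\colon (Y_K, Y_A, Y_H) \mapsto Y_K + Y_A + \Ad(aw)Y_H$ with $\|R\|\leq C$, and set $F(Y_K, Y_A, Y_H) := \log\bigl(\exp Y_K \cdot \Ad(k)\exp Y_A \cdot \Ad(kaw)\exp Y_H\bigr)$. The Baker--Campbell--Hausdorff formula gives $F(Y) = \Psi(Y) + O(\|Y\|^2)$, so the Newton iteration $Y^{(n+1)} = Y^{(n)} + R(X_0 - F(Y^{(n)}))$ starting from $Y^{(0)} = 0$ converges quadratically whenever $X_0 = \log x$ satisfies $\|X_0\| < \e_0(c)$, producing $(Y_K, Y_A, Y_H)$ of norm $\leq \ell\e$ that give the desired group-level decomposition. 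The key obstacle is arranging the linear computation on each $\la g^{[\alpha]}$ so that the potentially large factor $e^{\alpha(\log a)}$ coming from $\Ad(a)$ is exactly cancelled by the $e^{-\alpha(\log a)}$ in the explicit formula for $X_2$, leaving only the mild $e^{-2c}$-perturbation $e^{-2\alpha(\log a)}\theta\sigma_w$ of the identity to invert; once that uniform linear estimate is in place, the passage to the group by iteration is routine.
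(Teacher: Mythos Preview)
Your approach is correct in spirit and genuinely different from the paper's, but the lifting step has a gap that needs repair.

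\textbf{Comparison with the paper.} The paper never linearizes. After reducing to $w=e$ it works entirely at the group level: starting from the local product $G_\e\subset K_{\theta\e}Z_{\theta\e}N^+_{\theta\e}$, it uses the contraction $a^{-1}N^+_\e a\subset N^+_{\alpha\e}$ ($\alpha<1$) for $c$-regular $a$, and then a ping-pong between $N^+$ and $N^-$ through $H$ and $K$ (each passage $N^+\to N^-ZH$ and $N^-\to KZN^+$ costing a bounded factor, each conjugation by $a$ gaining a factor $\alpha$). Iterating kills the unipotent piece in the limit, leaving $K_{\sigma\e}Z_{\sigma\e}aH_{\sigma\e}$; finally $Z=C_G(A)$ splits as $(Z\cap K)\cdot A\cdot(Z\cap H)$. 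Your root-space inversion of $1-e^{-2\alpha(\log a)}\theta\sigma_w$ is a cleaner and more explicit substitute for this dynamical iteration, and makes the uniform constant transparent.

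\textbf{The gap.} Your claim ``BCH gives $F(Y)=\Psi(Y)+O(\|Y\|^2)$'' is not uniform in $a$ as written: the BCH remainder is quadratic in the \emph{exponents} $Y_K,\ \Ad(k)Y_A,\ \Ad(kaw)Y_H$, and $\|\Ad(kaw)Y_H\|$ can be of order $e^{\alpha(\log a)}\|Y_H\|$, which is unbounded over $c$-regular $a$. The fix is to run the iteration in the conjugated variable $Y_H':=\Ad(kaw)Y_H\in\Ad(a)\la h'$; then $F$ becomes the fixed map $(u,v,w)\mapsto\log(e^u e^v e^w)$, independent of $a$, and your linear estimate is exactly the statement that the right-inverse is uniformly bounded in the primed variables, so the iteration converges to $(Y_K,Y_A,Y_H')$ of size $\le\ell\e$. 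To then conclude $\|Y_H\|\le\ell'\e$ you need one more observation: your explicit formula gives $\|X_2\|\le e^{-\alpha(\log a)}(1-e^{-2c})^{-1}\|Y_\alpha-\theta Y_{-\alpha}\|$, so your particular right-inverse bounds \emph{both} $\|Y_H'\|$ and $\|Y_H\|$ by $C\|Z\|$; since every increment in the iteration lies in the image of this same right-inverse, the bound on $\|Y_H\|$ propagates to the limit. Once you make this explicit, the argument goes through.
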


The continuity of the Cartan decomposition for Riemannian symmetric
spaces (that is, when $H=K$) was independently shown in Nevo
\cite[Proposition~7.3]{n} and by Gorodnik and Oh \cite[Theorem~2.1]{GO}.
While the proof of \cite{n} uses embeddings of $G$ in linear groups,
the proof of \cite{GO} is based on geometric properties of the
Riemannian symmetric spaces.  The strong wavefront lemma was used in
\cite{n} to prove maximal inequalities for cube averages on semisimple
groups and in \cite{GO} to compute the asymptotics of the number of
lattice points lying in sectors.

Theorem~\ref{th:sw} fails on the set of singular elements; for
example, in $\SL_2(\R)$ if $\Omega$ is a small neighborhood of the
$e$, then $(\Omega\cap K)(\Omega\cap A)(\Omega\cap K)$ does not
contain a neighborhood of the $e$ in $\SL_2(\R)$.  To state a version
of the strong wavefront lemma that holds for singular elements, we
introduce additional notation.  Given $J\subset \Delta_\sigma$, an
element $kawh\in KA^+\cW H$ is called {\em $(J,c)$-regular\/} if
$\alpha(\log a)\geq c$ for all $\alpha\in J$.  Let
$I=\Delta_\sigma\setminus J$. We set $A_I=\exp(\ker I)\subset A$. Let
$M_I$ be the analytic semisimple subgroup whose Lie algebra is
generated by $\la{g}^{\pm\beta}$, $\beta\in \Sigma_\sigma^+\cap
\langle I\rangle$. Then $M_I$ centralizes $A_I$.  Now
\[
G=\cup_{w\in \cW} KM_IA_I^+wH\quad\text{and}\quad M_I\cap A_I=\{e\},
\]
where $A_I^+=A_I\cap A^+$.

\begin{thm}[Strong wavefront lemma-II]\label{th:sw2}
  Given $c>0$, there exist $\ell>1$ and $\e_0>0$ such that for any
  $I\subset\Delta_\sigma$ and $J=\Delta_\sigma\setminus I$, and every
  $g=kawh\in KA^+\cW H$ and $0<\e<\e_0$, if $g$ is $(J,c)$-regular, then
  \[
  \cO_\e\cdot g \subset (K\cap \mathcal{O}_{\ell\e})k\cdot (M_I\cap
  \mathcal{O}_{\ell\e})\cdot (A_I\cap \mathcal{O}_{\ell\e})a\cdot w
  (H\cap \mathcal{O}_{\ell \e})h.
  \]
\end{thm}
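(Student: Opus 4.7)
The plan is to reformulate the desired inclusion as a nonlinear equation on $\la{g}$ and solve it via a quantitative inverse function theorem, where the uniform constants will come from a restricted-root-space analysis.

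Fix $g=kawh$ that is $(J,c)$-regular. The asserted inclusion is equivalent to the statement that for every $y=\exp(Y)\in\cO_\e$ there exists $(X,M,A',H')\in\la{k}\oplus\la{m}_I\oplus\la{a}_I\oplus\la{h}$ with each of $\|X\|,\|M\|,\|A'\|,\|H'\|$ at most $\ell\|Y\|$ satisfying
\[
y\cdot kawh=\exp(X)\,k\exp(M)\exp(A')\,a\,w\exp(H')\,h.
\]
I would manipulate this identity (isolate $y$, conjugate by $k$, cancel $a,w,h$, and use that $K$ is isometric with $\la{k}$ being $\Ad(K)$-invariant) to reduce the problem to finding, for each $Z\in\la{g}$ with $\|Z\|<\e$, small $(X,M,A',H')$ solving
\[
\exp(Z)=\exp(X)\exp(M)\exp(A')\exp(\Ad(aw)H').
\]
The derivative at the origin is the linear map
\[
F_{a,w}:\la{k}\oplus\la{m}_I\oplus\la{a}_I\oplus\la{h}\to\la{g},\quad(X,M,A',H')\mapsto X+M+A'+\Ad(aw)H'.
\]
Once I establish that $F_{a,w}$ is surjective with right inverse of norm bounded uniformly over $(J,c)$-regular $a\in A^+$ and $w\in\cW$, the Baker--Campbell--Hausdorff formula yields only quadratic-in-$\e$ error terms, and a Banach fixed point argument provides the nonlinear solution with linear-in-$\|Y\|$ bound.

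To establish the uniform surjectivity, I would decompose $\la{g}=\la{z}_{\la{g}}(\la{a})\oplus\bigoplus_{\alpha\in\Sigma_\sigma}\la{g}^\alpha$ into restricted root spaces. Since $\sigma|_{\la{a}}=\theta|_{\la{a}}=-\operatorname{id}$, both involutions swap $\la{g}^{\pm\alpha}$, and each block $\la{g}^\alpha+\la{g}^{-\alpha}$ splits into two cases. If $\alpha\in\langle I\rangle$, then $\la{g}^\alpha+\la{g}^{-\alpha}\subset\la{m}_I$ by definition of $M_I$, so the $\la{m}_I$-coordinate of $F_{a,w}$ surjects onto this block with inverse of norm $1$, and the troublesome $\Ad(a)$ stretching plays no role. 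If $\alpha\notin\langle I\rangle$, the expansion $\alpha=\sum n_\beta\beta$ must have $n_\beta>0$ for some $\beta\in J$, so $\alpha(\log a)\ge c$; a direct $2\times 2$ computation using the bases $X+\theta X\in\la{k}$, $X+\sigma X\in\la{h}$ for $X\in\la{g}^\alpha$ and the scaling $\Ad(a)|_{\la{g}^{\pm\alpha}}=e^{\pm\alpha(\log a)}\cdot\operatorname{id}$ should show that the $\la{k}$- and $\Ad(aw)\la{h}$-components span this block with inverse norm $O((\sinh c)^{-1})$. The centralizer piece is covered by the fixed, $a$-independent decomposition $\la{z}_{\la{g}}(\la{a})=\la{z}_{\la{k}}(\la{a})+(\la{m}_I\cap\la{z}_{\la{g}}(\la{a}))+\la{a}_I+\la{z}_{\la{h}}(\la{a})$ coming from the infinitesimal Cartan decomposition; and because $\cW$ is finite with $\Ad(w)$ merely permuting root blocks, the overall bound will be uniform in $w$.

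The main obstacle I anticipate is precisely the case $\alpha\notin\langle I\rangle$ above: when $\sigma|_{\la{g}^\alpha}=\theta|_{\la{g}^\alpha}$, the subspaces $\la{k}\cap(\la{g}^\alpha+\la{g}^{-\alpha})$ and $\la{h}\cap(\la{g}^\alpha+\la{g}^{-\alpha})$ coincide, so at $a=e$ the Cartan decomposition is genuinely singular along the associated wall; the role of the hypothesis $\alpha(\log a)\ge c$ is precisely to ensure $\Ad(aw)$ shears the $\la{h}$-part transversally to the $\la{k}$-part at the quantitative rate $\sinh c$. Combining this root-space estimate with the linearization step yields the theorem with $\ell$ and $\e_0$ depending only on $c$.
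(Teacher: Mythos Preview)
Your approach is genuinely different from the paper's. The paper does not linearize; instead it proves an auxiliary result $G_\e a\subset K_{\sigma\e}Z_{I,\sigma\e}aH_{\sigma\e}$ (where $Z_I=C_G(A_I)$) by an iterative contraction argument at the group level: write $G_\e\subset K_\e Z_{I,\e}N^+_{I,\e}$, push the $N^+_I$-piece past $a$ (which contracts it), convert it to an $N^-_I$-piece via $H$ (Lemma~2.6), push back through $a$ (contracting again), convert back via $K$, and repeat; after infinitely many steps the unipotent part disappears. Then $Z_{I,\sigma\e}$ is split as $(K\cap Z_I)\cdot M_I\cdot A_I\cdot (H\cap Z_I)$ by a local product decomposition. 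Your inverse-function-theorem route packages the same root-space dichotomy (roots in $\langle I\rangle$ handled by $\la m_I$; roots outside $\langle I\rangle$ handled by $\la k$ and $\Ad(a)\la h$ with a $\sinh c$ gap) into a single linear estimate, which is cleaner conceptually, whereas the paper's iteration stays at the group level and avoids any quantitative implicit function machinery.

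There is, however, one real hazard in your passage from the linear to the nonlinear problem. You assert that once $F_{a,w}$ has a uniformly bounded right inverse, ``the BCH formula yields only quadratic-in-$\e$ error terms, and a Banach fixed point argument provides the nonlinear solution.'' The BCH remainder is indeed $O(\e^2)$ at any fixed iterate, because $\|\Ad(aw)H'\|=\|Z-X-M-A'\|$ is itself $O(\e)$. But the \emph{contraction} fails if you run the fixed point in the variable $H'$: the Lipschitz constant of the remainder in the $H'$-direction picks up a factor $\|\Ad(aw)\|_{\mathrm{op}}$, which is unbounded over $(J,c)$-regular $a$, so the iteration does not contract uniformly. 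The fix is to run the contraction in the variable $V=\Ad(aw)H'\in\Ad(aw)\la h$; then the nonlinear map $(X,M,A',V)\mapsto\log(\exp X\exp M\exp A'\exp V)$ is independent of $a$ and has Lipschitz remainder $O(\e)$ on an $\e$-ball, and your block analysis shows both that the linear right inverse $\la g\to\la k\oplus\la m_I\oplus\la a_I\oplus\Ad(aw)\la h$ is uniformly bounded \emph{and} that the composite $(\Ad(aw))^{-1}\circ\pi_4$ of that right inverse is uniformly bounded (this is exactly your $(\sinh c)^{-1}$ estimate). The $H'$-bound is then recovered a posteriori from the fixed-point equation. Without this change of variable or an equivalent two-norm argument, the step ``a Banach fixed point argument provides the nonlinear solution'' does not go through uniformly in $a$.
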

  
\begin{rem} \label{label:w-M_I} \rm
    Observe that  by \cite[Corollary~4.7]{gos}, since $wv_0$
    is fixed by the symmetric subgroup $M_I\cap wHw\inv$ of $M_I$, the
    orbit $M_I(wv_0)$ is closed. Since $M_I\subset Z_G(A_I)$,
    we have $M_Iawv_0=aM_Iwv_0$ is closed. Thus, the set $KM_Iawv_0$ is
    closed for any $a\in A_I$. Moreover the natural map $KM_I/(M_I\cap
    wHw\inv)\to KM_Iawv_0$ given by $km(M_I\cap wHw\inv)\mapsto
    kmawv_0$ is a homeomorphism.     
  \end{rem} 
  
A natural generalization of the Cartan decomposition for Riemannian
symmetric spaces is the decomposition
\begin{equation}\label{eq:cartan00}
G=K{\tilde A^+}H
\end{equation}
where $\tilde{A^+}$ is  a Weyl chamber in $A$  with respect to the
Weyl group $(N_G(A)\cap K\cap H)/(Z_G(A)\cap K\cap H)$. In 
Section \ref{sec:last}, we will obtain the strong wavefront lemmas with
respect to the decomposition (\ref{eq:cartan00}), which generalize
Theorem \ref{th:sw} and Theorem \ref{th:sw2}.

\subsection*{Well-roundedness of sectors} 


Let $\iota: G\to \hbox{GL}(W)$ be an irreducible representation of $G$
and $v_0\in W$ such that if $H$ denotes the stabilizer of $v_0$ then
$H$ is a symmetric subgroup of $G$. Therefore by
\cite[Corollary~4.7]{gos} the orbit $V=Gv_0$ closed. Hence it can be
realized as an affine symmetric
space $G/H$. Let $\Gamma$ be a lattice in $G$. We
suppose that $H\cap \Gamma$ is also a lattice in $H$. In particular,
$H\Gamma$ is closed in $G$ (, and hence $\Gamma v_0$
is a discrete subset of $W$.  For a norm $\|\cdot\|$ on $W$, we set
$$
B_T=\{w\in W:\, \|w\|<T\}.
$$
It was shown in \cite{drs,em} that the orbit $\Gamma v_0$ is
``equidistributed'' with respect to the sets $V\cap B_T$ in the
following sense:
\begin{equation}\label{eq:equi}
  \# (\Gamma v_0\cap B_T) \sim_{T\to \infty}
  \operatorname{Vol}(V\cap  B_T)
\end{equation}
where $\vol$ is the $G$-invariant measure on $V\cong G/H$ determined
by the Haar measures on $G$ and $H$ chosen such that
$\vol(G/G\cap\Gamma)=\vol(H/H\cap\Gamma)=1$.  In fact, it was shown in
\cite{em} that (\ref{eq:equi}) holds for any {\it well-rounded\/} family
of sets $S_T\subset V$ in place of $V\cap B_T$.  Recall that a family
$\{S_T\}$ is called well-rounded if for any $\e>0$ there exists a
neighborhood $\mathcal{O}$ of $e$ in $G$ such that
\begin{equation} \label{eq:well-rounded}
  \frac{\vol(\mathcal{O}\cdot \partial S_T)}{\vol(S_T)}<\e
\end{equation}
for all sufficiently large $T>0$. For any 
$I\subset \Delta_\sigma$, $w\in \cW$ and $\Omega\subset KM_I/(M_I\cap
wHw\inv)$, we consider a family of sets
\begin{equation} \label{eq:S_T-Omega}
S_T(\Omega,w)=\tilde \Omega A_I^+wv_0\cap B_T,
\end{equation}
where $\tilde\Omega\subset KM_I$ is such that
$\Omega=\tilde\Omega(M_I\cap wHw\inv)$; the set $S_T(\Omega,w)$ is
well defined because $mawv_0=awv_0$ for all $a\in A_I$ and $m\in
(M_I\cap wHw\inv)$. 

Using the strong wavefront lemma, and the volume computation in
\cite{gos} (cf.~Proposition~\ref{p:volume}) we obtain the following:
\begin{thm}\label{cor:counting}
  For every $I\subset \Delta_\sigma$, $w\in\cW$, and a bounded
  measurable set $\Omega\subset KM_I/(M_I\cap wHw\inv)$ with positive
  measure and boundary of measure zero\footnote{The measure on
  $KM_I/(M_I\cap wHw\inv)$ is understood in terms of the
  identification \eqref{eq:mea} and \eqref{eq:dm}.}, the family
  $\{S_T(\Omega,w)\}_{T\to\infty}$ is well-rounded. In particular,
  \begin{align*}
    \# (\Gamma v_0\cap S_T(\Omega,w)) \sim_{T\to \infty}
    \operatorname{Vol}(S_T(\Omega,w))\sim_{T\to\infty}
    C_I(\Omega,w)\cdot T^{a_I}(\log T)^{b_I-1},
  \end{align*}
where $a_I\in\mathbb{Q}^+$, $b_I\in\mathbb{N}$, and $C_I(\Omega,w)>0$.
\end{thm}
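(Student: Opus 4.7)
The strategy is to deduce the counting statement from the Eskin-McMullen equidistribution theorem \cite{em}, which asserts $\#(\Gamma v_0 \cap S_T) \sim \vol(S_T)$ for any well-rounded family of finite-volume subsets $\{S_T\}$ of $V = Gv_0$. So the plan reduces to establishing (i) the explicit volume asymptotic for $S_T(\Omega,w)$ and (ii) well-roundedness of $\{S_T(\Omega,w)\}$.

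For (i), I would use Remark~\ref{label:w-M_I}: the map $(km(M_I\cap wHw\inv),a) \mapsto kmawv_0$ is a homeomorphism of $\bigl(KM_I/(M_I\cap wHw\inv)\bigr)\times A_I^+$ onto its image, and the $G$-invariant measure on $V$ pulls back to a product $d\mu\otimes J_I(a)\,da$ for an explicit Jacobian $J_I$ that is a product of hyperbolic sines in the roots on $\la{a}$, as computed in Proposition~\ref{p:volume} (following \cite{gos}). Integrating against $\tilde\Omega \times \{a\in A_I^+ : \|\iota(a)wv_0\| < T\}$ and performing a Laplace-type analysis on the dominant exponential yields the asymptotic $C_I(\Omega,w)\,T^{a_I}(\log T)^{b_I-1}$; the logarithmic factor comes from the dimension of the face of $A_I^+$ on which the weight of $v_0$ is maximized.

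For (ii), fix $\e > 0$, let $\ell,\e_0$ come from Theorem~\ref{th:sw2}, and choose $\delta\in(0,\e_0/\ell)$ small. For any $g = kmawv_0 \in S_T(\Omega,w)$ that is $(J,\ell\delta)$-regular, where $J = \Delta_\sigma\setminus I$, Theorem~\ref{th:sw2} gives
\[
\cO_\delta \cdot g \;\subset\; (K\cap\cO_{\ell\delta})k\cdot(M_I\cap\cO_{\ell\delta})m\cdot(A_I\cap\cO_{\ell\delta})a\cdot wv_0,
\]
since the $(H\cap\cO_{\ell\delta})h$ factor fixes $v_0$ after projection to $V$. Hence $\cO_\delta \cdot g$ lies in a sector of the form $\tilde\Omega^{(\delta)} A_I^+ wv_0 \cap B_{(1+O(\delta))T}$, where $\tilde\Omega^{(\delta)}$ is an $\ell\delta$-thickening of $\tilde\Omega$ in $KM_I$. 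Using that $\partial\Omega$ has measure zero and that norm balls on $V$ are well-rounded (a standard consequence of smoothness of the norm, cf.~\cite{drs,em}), the $\cO_\delta$-neighborhood of the $(J,\ell\delta)$-regular part of $\partial S_T(\Omega,w)$ has volume $O(\e)\cdot\vol(S_T(\Omega,w))$.

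The principal obstacle is the singular region where some $\alpha(\log a)<\ell\delta$ for $\alpha\in J$, since Theorem~\ref{th:sw2} fails there (witness the $\SL_2(\R)$ example quoted above). I would control this with the Jacobian from (i): restricting $a$ to any wall $\alpha(\log a)=0$ with $\alpha\in J$ strictly lowers the exponential growth rate of $J_I(a)$, so the volume of a thin $\ell\delta$-neighborhood of the walls is $o(\vol(S_T(\Omega,w)))$ uniformly in large $T$ as $\delta\to 0$. Combining this tail bound with the interior perturbation estimate establishes well-roundedness, and the Eskin-McMullen theorem then delivers the counting asymptotic.
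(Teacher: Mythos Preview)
Your overall architecture matches the paper's: reduce to Eskin--McMullen by proving well-roundedness, split into the $(J,c)$-regular part (controlled by the strong wavefront lemma) and the singular part (controlled by a volume bound), and use Proposition~\ref{p:volume} for the asymptotic. Two points require correction.

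\textbf{The wall estimate.} Your justification for the singular region is not correct. You assert that ``restricting $a$ to any wall $\alpha(\log a)=0$ with $\alpha\in J$ strictly lowers the exponential growth rate of $J_I(a)$.'' This is false in general: if $b_I\ge 2$, i.e.\ the maximum $a_I=\max\{u_\alpha/m_\alpha:\alpha\in J\}$ is achieved at more than one simple root, then restricting to the wall $\alpha_0=0$ for one such $\alpha_0$ still leaves another maximizer, and the wall carries the \emph{same} exponent $T^{a_I}$. The density $\xi_I(a)\sim e^{2\rho(\log a)}$ does not drop in order along directions in the wall. The paper's Proposition~\ref{p:walls} handles this differently: it shows
\[
\vol(V_{I,w}(c)\cap B_T)\ \ll\ c\cdot T^{a_I}(\log T)^{b_I-1},
\]
the same leading order but with a constant $\ll c$. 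The mechanism is not a drop in growth rate; rather, the limit measure $\eta_{I,w}$ from \eqref{eq:limit} has support meeting a fixed compact $L\subset A_I^+$, and the slab $\{\alpha_0(\log a)\le c\}$ intersects $L$ in a set of Lebesgue measure $\ll c$. You need this thin-slab argument (or an equivalent), not a growth-rate comparison.

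\textbf{Invoking the wavefront lemma.} You apply Theorem~\ref{th:sw2} to $g=kmawv_0$ and conclude an inclusion with a factor $(M_I\cap\cO_{\ell\delta})m$. But Theorem~\ref{th:sw2} is stated only for $g=kawh\in KA^+\cW H$, and its $M_I$-factor is a neighborhood of the identity, not of a prescribed $m$. To pass from one to the other you must write $m=k_0a_0h_0$ via the Cartan decomposition of $M_I$, apply Theorem~\ref{th:sw2} to $(kk_0)(a_0a)w(h_0h)$, and then conjugate the $M_I$-neighborhood back across $k_0a_0h_0$ using that $m$ ranges in the bounded set $\Omega$. This is exactly the content of the paper's Corollary~\ref{c:3}; your sketch silently assumes it.
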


We will give explicit formulas for $a_I$, $b_I$, and $C_I(\Omega,w)$
in section~\ref{subsec:volume_estimate}.  In particular,
$C_I(\Omega,w)$ can be computed using a $G$-invariant measure
supported on one of the components of the Satake boundary of $V$.

\begin{rem} {\rm \begin{enumerate} \item Although a similar counting
      question was considered in \cite{gos}, the sets $S_T(\Omega,w)$
      do not fit into the framework of \cite{gos}.  For the space of
      quadratic forms $\mathcal{Q}_W$, the counting results in
      \cite{gos} are always of order $T^{(\dim W)(\dim W-1)/2}$ (see
      \cite[Section~2.3]{gos}). On the other hand,
      Theorem~\ref{th:quad} exhibits different asymptotic behaviors
      depending on the choice of the decomposition \eqref{eq:W}.

\item 
In order to deduce Theorem~\ref{cor:counting} from 
Theorem~\ref{th:sw2}, which applies only to $(J,c)$-regular elements,
we show that the set of non-$(J,c)$-regular elements in
$S_T(\Omega,w)$ has negligible volume compared to the volume of
$S_T(\Omega,w)$ for sufficiently small values of $c$. 


\end{enumerate}
}
\end{rem}

\subsection{Acknowledgment.} We would like to thank Yves Benoist
for useful comments.

\section{Strong wavefront lemma}\label{sec:wave}

This section is devoted to the proofs of Theorems~\ref{th:sw} and
\ref{th:sw2}. We use the same notation as in the introduction. Since
any two Riemannian metrics are bi-Lipschitz in a neighborhood of
identity, it suffices to prove the theorems for one such metric. It
will be convenient to work with the right-invariant Riemannian metric
$d$ induced by the positive definite form
\begin{equation*}
  B(X, Y)=-\operatorname{Tr}(\operatorname{ad} X \circ \operatorname{ad}
  (\theta(Y)),\quad X,Y\in\mathfrak{g}.
\end{equation*}
We will use the following properties of $B$:
\begin{align*}
  &B(\mathfrak{g}_\alpha,\mathfrak{g}_\beta)=0\quad\hbox{for all $\alpha\ne\beta\in \Sigma_\sigma\cup\{0\}$},\\
  &B^\theta=B^\sigma=B.
\end{align*}

\begin{rem} \label{rem:Weyl} \rm In many of the results stated in the
  introduction, we fix $w\in\cW$ representing a Weyl group
  element. The explanation given below shows that for proofs, we can
  assume that $w=e$ and have simpler notation. 

  Let $i_w$ denote the inner conjugation on $G$ by $w$; that is,
  $i_w(g)=wgw\inv$ for all $g\in G$. Then
  $\sigma_w:=i_w\circ\sigma\circ i_w\inv$ is also an involution of $G$
  and $wHw\inv$ is the associated symmetric subgroup. Note that
  $\sigma_w(a)=a\inv$ for any $a\in A$. Also $\theta\circ
  \sigma_w=\sigma_w\circ \theta$. Therefore in order to prove some of
  the results stated in the introduction for a fixed $w\in \cW$, we can
  replace $\sigma$ by $\sigma_w$, $H$ by $wHw\inv$, and $v_0$ by
  $wv_0$, and assume that $w=e$.
  \end{rem}

For $\varepsilon>0$ and $S\subset G$, we set
$$
S_\varepsilon=\{s\in S: d(s,e)<\varepsilon\}.
$$

For $I\subset \Delta_\sigma$ and $c>0$, we define $$ A_I^+(c)=\{a\in
A^+:\, \beta(\log a)\ge c \hbox{ if $\beta\in \Delta_\sigma-I$ and }
\beta(\log a)<c\hbox { if $\beta\in I$} \}.
$$
For instance, if $I=\{\beta\}$, then $A_{I}^+(c)$ forms a system of
neighborhoods of the wall $\{a\in A^+: \beta (\log a)=0\}$ in $A^+$.

\begin{thm}\label{lm:swl}
  For $I\subset \Delta_\sigma$ and $c>0$, there exist $\e_0>0$ and
  $\sigma>1$ such that for every $0<\e<\e_0$ and $a\in A_I^+(c)$,
$$
G_\e\cdot a\subset K_{\sigma\e}\cdot Z_{I,\sigma\e}\cdot a \cdot
H_{\sigma\e}.
$$
\end{thm}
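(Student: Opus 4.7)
The plan is to apply a quantitative inverse function theorem to the multiplication map
$\Phi\colon K\times Z_I\times H\to G$ defined by $\Phi(k,z,h)=kzah a^{-1}$,
for each fixed $a\in A_I^+(c)$. If one can show that $\Phi$ admits, uniformly in $a$, a smooth local inverse $\Phi^{-1}$ on a neighborhood of $e\in G$ of radius at least $\e_0$ with Lipschitz constant at most $\sigma$, then the preimage of any $g\in G_\e$ supplies $(k,z,h)\in K_{\sigma\e}\times Z_{I,\sigma\e}\times H_{\sigma\e}$ satisfying $kzah=ga$, which is the claimed inclusion.

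To analyze the differential, parametrize $k=e^{X_k}$, $z=e^{X_z}$, $h=e^{X_h}$ with $X_k\in\la{k}$, $X_z\in\la{z}_I$, $X_h\in\la{h}$, so that $\Phi=e^{X_k}e^{X_z}e^{\Ad(a)X_h}$ and $d\Phi(0)$ becomes the linear map $L_a(X_k,X_z,X_h)=X_k+X_z+\Ad(a)X_h$. Decompose $\la{g}=\la{g}^0\oplus\bigoplus_{\alpha\in\Sigma_\sigma}\la{g}^\alpha$ into $\la{a}$-root spaces. Since $\la{z}_I$ contains $\la{g}^0$ and every $\la{g}^\alpha$ with $\alpha\in\langle I\rangle$, these components of any target $Y\in\la{g}$ are absorbed directly into $X_z$. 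For $\alpha\in\Sigma_\sigma^+\setminus\langle I\rangle$, the constraints $X_k^{-\alpha}=\theta(X_k^\alpha)$ and $X_h^{-\alpha}=\sigma(X_h^\alpha)$ reduce hitting a target $(Y^\alpha,Y^{-\alpha})\in\la{g}^\alpha\oplus\la{g}^{-\alpha}$ to a $2\times 2$ linear system on $\la{g}^\alpha$. Splitting $\la{g}^\alpha$ into the $\pm 1$-eigenspaces of the involution $\tau=\theta\sigma$ diagonalizes this system into scalar equations whose coefficients are $2\sinh(\alpha(\log a))$ and $2\cosh(\alpha(\log a))$. Because $a\in A_I^+(c)$ forces $\alpha(\log a)\ge c$ for every $\alpha\in\Sigma_\sigma^+\setminus\langle I\rangle$, these coefficients are bounded below by a positive constant depending only on $c$, so the system admits a solution $(X_k,X_h)$ with $\|X_k\|,\|X_h\|\le C(c)\|Y\|$. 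Hence $L_a$ admits a right inverse of operator norm bounded uniformly in $a\in A_I^+(c)$.

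The main obstacle is that the naive bound $\|\Ad(a)X_h\|\le e^{\max\alpha(\log a)}\|X_h\|$ blows up as $a$ moves deep into $A^+$, so uniform control of the higher derivatives of $\Phi$ in the coordinates $(X_k,X_z,X_h)$ is not automatic. The fix is to reparametrize by $\tilde X_h=\Ad(a)X_h\in\Ad(a)\la{h}$, so that $\Phi$ becomes $(X_k,X_z,\tilde X_h)\mapsto e^{X_k}e^{X_z}e^{\tilde X_h}$, whose BCH expansion is governed purely by Lie-bracket structure constants of $\la{g}$, independent of $a$; explicit root-wise formulas show that the solutions produced above have $\tilde X_h$ of size $O_c(\|Y\|)$ and that $\|X_h\|\le\sqrt{2}\,\|\tilde X_h\|$ uniformly for $a\in A_I^+(c)$. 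With this uniform second-derivative control and the uniform bound $\|L_a^{-1}\|\le C(c)$, the quantitative IFT produces the required $\e_0>0$ and $\sigma>1$ depending only on $c$.
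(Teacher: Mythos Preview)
Your approach is correct and genuinely different from the paper's. The paper argues iteratively: it first writes $G_\e\subset K_{\theta\e}Z_{I,\theta\e}N^+_{I,\theta\e}$ (Lemma~\ref{eq:decomp1}), then repeatedly applies Lemma~\ref{l_shah}, which in one pass conjugates the $N^+_I$-factor through $a$ (contracting it by Lemma~\ref{l_contract}), rewrites it via $N^+_I\subset N^-_IZ_IH$ and $N^-_I\subset KZ_IN^+_I$ (Lemma~\ref{l:pm}), and conjugates back --- shrinking the unipotent piece by a fixed factor $\beta<1$ while enlarging the $K$, $Z_I$, $H$ pieces by a geometrically summable amount; the unipotent factor disappears in the limit. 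You instead package the same hyperbolicity (the lower bound $\alpha(\log a)\ge c$ for $\alpha\in\Sigma_\sigma^+\setminus\langle I\rangle$) directly into a uniform bound on a right inverse of $d\Phi$ and invoke a quantitative inverse function theorem, the key device being the reparametrization $\tilde X_h=\Ad(a)X_h$ that makes the higher-order terms $a$-independent. Two points deserve to be made explicit in a full write-up: since $\Phi$ is only a submersion ($\dim K+\dim Z_I+\dim H>\dim G$ in general), you should restrict to the image $V_a$ of your right inverse before applying the IFT; and the uniform second-derivative bound follows because in the $\tilde X_h$-coordinates $\Phi$ is the restriction to a varying linear subspace of the fixed smooth map $(X_1,X_2,X_3)\mapsto e^{X_1}e^{X_2}e^{X_3}$ on $\la g^3$. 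The paper's argument is more elementary and self-contained; yours is more direct and lets the root-space linear algebra do all the work in one shot.
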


We consider the Lie subalgebra
\[
\la{n}^+_I=\bigoplus_{\beta\in \Sigma_\sigma^+:\,
  \beta|_{\la{a}_I}\neq 0} \la{g}_\beta \quad \text{and} \quad
\la{n}^-_I=\bigoplus_{\beta\in \Sigma_\sigma^+:\,
  \beta|_{\la{a}_I}\neq 0} \la{g}_{-\beta},
\]
and the corresponding analytic subgroups $N^+_I$ and $N^-_I$. Note
that the Lie algebra of $Z_I$ is given by
$$
\la{z}_I=\bigoplus_{\beta\in \Sigma_\sigma\cup\{0\}:,\,
  \beta|_{\la{a}_I}= 0} \la{g}_\beta,
$$
and we have the decomposition
\begin{equation} \label{eq:i} \la{g}=\la{n}^-_I\oplus \la{z}_I\oplus
  \la{n}^+_I.
\end{equation}

\begin{Lem}\label{eq:decomp1}
  There exist $\theta>1$ and $\e_0>0$ such that for every $0<\e<\e_0$,
  \begin{align*}
    G_\e\subset N_{I,\theta\e}^-Z_{I,\theta\e}
    H_{\theta\e}\quad\hbox{and}\quad G_\e\subset K_{\theta\e}
    Z_{I,\theta\e} N^+_{I,\theta\e}.
  \end{align*}
\end{Lem}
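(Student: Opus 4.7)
\medskip

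\noindent\textbf{Proof plan.} The plan is to reduce each inclusion to an inverse function theorem applied to a smooth product map at the identity, once the corresponding decomposition is established infinitesimally. The two decompositions are parallel: the first uses the involution $\sigma$ to bring in $\la{h}$, and the second uses $\theta$ to bring in $\la{k}$.

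\medskip

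\noindent\emph{Step 1: Infinitesimal decompositions.} I would first show
\begin{equation*}
  \la{g}=\la{n}^-_I+\la{z}_I+\la{h}\quad\text{and}\quad \la{g}=\la{k}+\la{z}_I+\la{n}^+_I.
\end{equation*}
Starting from $\la{g}=\la{n}^-_I\oplus\la{z}_I\oplus\la{n}^+_I$, it is enough to show the first inclusion $\la{n}^+_I\subset\la{n}^-_I+\la{h}$ and the second inclusion $\la{n}^-_I\subset\la{k}+\la{n}^+_I$. For the first, since $\la{a}\subset\la{q}$ we have $\sigma(\la{a})=-\la{a}$, so $\sigma$ sends $\la{g}_\alpha$ to $\la{g}_{-\alpha}$; thus for $X\in\la{g}_\alpha$ with $\alpha\in\Sigma^+_\sigma$ and $\alpha|_{\la{a}_I}\neq 0$, the element $X+\sigma(X)\in\la{h}$, while $\sigma(X)\in\la{n}^-_I$, hence $X\in\la{h}+\la{n}^-_I$. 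For the second, using $\theta(\la{g}_\alpha)=\la{g}_{-\alpha}$, for $Y\in\la{g}_{-\alpha}$ one has $Y+\theta(Y)\in\la{k}$ and $\theta(Y)\in\la{n}^+_I$, so $Y\in\la{k}+\la{n}^+_I$.

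\medskip

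\noindent\emph{Step 2: Pass from Lie algebras to the group.} For the first inclusion, consider the smooth map
\begin{equation*}
  \phi:\la{n}^-_I\oplus\la{z}_I\oplus\la{h}\to G,\qquad \phi(X,Y,Z)=\exp(X)\exp(Y)\exp(Z).
\end{equation*}
Its differential at the origin is the sum map $(X,Y,Z)\mapsto X+Y+Z$, which by Step~1 is surjective. Choose any linear complement $\la{v}$ of $\ker(d\phi)_0$ in $\la{n}^-_I\oplus\la{z}_I\oplus\la{h}$; then $\phi|_{\la{v}}$ is a local diffeomorphism onto a neighborhood of $e$ in $G$ by the inverse function theorem. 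Let $\psi:U\to\la{v}$ denote its local inverse on a neighborhood $U$ of $e$. Since $\psi$ is $C^1$ with $\psi(e)=0$, it is Lipschitz on some smaller ball $G_{\e_0}$ with respect to the norm $B(\cdot,\cdot)^{1/2}$ on $\la{g}$; composing with the three coordinate projections and the exponential map (each of which is Lipschitz at $0$) produces a constant $\theta>1$ so that for $g\in G_\e$ with $\e<\e_0$ we obtain $g=nzh$ with $n\in N^-_{I,\theta\e}$, $z\in Z_{I,\theta\e}$, $h\in H_{\theta\e}$. The second inclusion is identical with $(\la{n}^-_I,\la{z}_I,\la{h})$ replaced by $(\la{k},\la{z}_I,\la{n}^+_I)$.

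\medskip

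\noindent\emph{Main obstacle.} The non-routine ingredient is Step~1: exhibiting $\la{h}$, which is defined via the involution $\sigma$ and a priori has no compatibility with the $\ad\la{a}$-weight decomposition of $\la{g}$, as transverse to $\la{n}^-_I+\la{z}_I$. The argument above hinges on the fact that although $\sigma$ need not preserve individual root spaces, it does swap $\la{g}_\alpha$ and $\la{g}_{-\alpha}$ because $\la{a}\subset\la{q}$; any looseness at this point would break the Lipschitz estimate. Once this compatibility is secured, the remainder is a standard submersion-plus-Lipschitz-inverse argument and yields the uniform constant $\theta$.
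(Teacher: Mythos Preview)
Your proposal is correct and follows essentially the same route as the paper: both first establish the infinitesimal decompositions $\la{g}=\la{n}^-_I+\la{z}_I+\la{h}$ and $\la{g}=\la{k}+\la{z}_I+\la{n}^+_I$ via the identity $X=(X+\sigma(X))-\sigma(X)$ (resp.\ with $\theta$), and then invoke the inverse function theorem on the product map to get the bi-Lipschitz local inverse. The only cosmetic difference is that the paper, after noting $\la{n}_I^-\cap\la{h}=0$, picks the complement of the kernel specifically inside $\la{z}_I$ (writing $\la{g}=\la{n}_I^-\oplus\la{z}_0\oplus\la{h}$ for some $\la{z}_0\subset\la{z}_I$), whereas you take an arbitrary complement in the external direct sum and then project; both yield the same Lipschitz estimate.
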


\begin{proof}
  Since $\sigma|_\la{a}=-id$, we have $\sigma(\mathfrak
  n^-_{I})\subset \mathfrak n^+_{I}$, and for every $x\in
  \mathfrak{n}^{+ }_{I}$,
$$ 
x=(x+\sigma(x))-\sigma(x)\in \mathfrak{h}+\mathfrak{n}^{-}_{I}.
$$
Hence, it follows from (\ref{eq:i}) that
$$
\mathfrak{g}=\mathfrak{n}^-_I + \la{z}_I+\mathfrak{h}.
$$
Since $\mathfrak n_I^-\cap \mathfrak h=0$, there exists a subspace
$\mathfrak z_0$ of $\mathfrak z_I$ such that
$$
\mathfrak g =\mathfrak n_I^- \oplus \mathfrak z_0 \oplus \mathfrak h.
$$
Then the product map $N_I^-\times \exp (\mathfrak z_0) \times H\to G$
is a diffeomorphism at a neighborhood of the identity.  In particular,
it is bi-Lipschitz, and the first claim follows.  The proof of the
second claim is similar.
\end{proof}

\begin{lem}\label{l_contract}
  For $I\subset\Delta_\sigma$ and $c>0$, there exist $\varepsilon_0>0$
  and $\alpha\in (0,1)$ such that for every $0< \varepsilon
  <\varepsilon_0$ and $a\in A_I^+(c)$,
$$
a^{-1}N_{I,\varepsilon}^{+} a\subset
N_{I,\alpha\varepsilon}^{+}\quad\hbox{and}\quad aN_{I,\varepsilon}^{-}
a^{-1}\subset N_{I,\alpha\varepsilon}^{-}.
$$
\end{lem}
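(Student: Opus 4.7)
The plan is to reduce both inclusions to a linear contraction estimate on the Lie algebra and then transfer it to the group via the exponential map. I will focus on the first inclusion, since the second is entirely analogous: $\operatorname{Ad}(a)$ acts on the root space $\mathfrak{g}_{-\beta}\subset\mathfrak{n}_I^-$ by the scalar $e^{-\beta(\log a)}$, just as $\operatorname{Ad}(a^{-1})$ acts on $\mathfrak{g}_\beta\subset \mathfrak{n}_I^+$ by $e^{-\beta(\log a)}$.

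The key linear algebraic fact is a uniform lower bound $\beta(\log a)\geq c$ for all roots $\beta$ occurring in $\mathfrak{n}_I^+$ and all $a\in A_I^+(c)$. Writing $\beta\in\Sigma_\sigma^+$ as a nonnegative integral combination $\beta=\sum_{\alpha\in\Delta_\sigma}n_\alpha(\beta)\,\alpha$ of simple roots, the condition $\beta|_{\mathfrak{a}_I}\ne 0$ is equivalent to $n_\alpha(\beta)>0$ for some $\alpha\in\Delta_\sigma\setminus I$. For $a\in A_I^+(c)$ all $\alpha(\log a)\geq 0$, and $\alpha(\log a)\geq c$ whenever $\alpha\in\Delta_\sigma\setminus I$, hence
\[
\beta(\log a)=\sum_{\alpha\in\Delta_\sigma}n_\alpha(\beta)\,\alpha(\log a)\;\geq\;\sum_{\alpha\in\Delta_\sigma\setminus I}n_\alpha(\beta)\cdot c\;\geq\;c.
\]
Since the root spaces $\mathfrak{g}_\beta$ are mutually $B$-orthogonal and $\operatorname{Ad}(a^{-1})$ acts on $\mathfrak{g}_\beta$ by $e^{-\beta(\log a)}$, this gives the contraction
\[
\|\operatorname{Ad}(a^{-1})X\|_B\;\leq\;e^{-c}\|X\|_B\qquad\text{for all }X\in\mathfrak{n}_I^+.
\]

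To transfer this to the group, I will use that the exponential map is a local diffeomorphism at $0$, and that the right-invariant Riemannian distance $d$ satisfies $d(\exp X,e)=\|X\|_B+O(\|X\|_B^2)$. Thus for any $\eta>0$ one can pick $\varepsilon_0>0$ so that $(1-\eta)\|X\|_B\leq d(\exp X,e)\leq(1+\eta)\|X\|_B$ whenever $\max(\|X\|_B,d(\exp X,e))<\varepsilon_0$, and so that $\exp$ restricted to a small neighborhood of $0$ in $\mathfrak{n}_I^+$ maps diffeomorphically onto a neighborhood of $e$ in $N_I^+$. Given $\exp X\in N_{I,\varepsilon}^+$ with $\varepsilon<\varepsilon_0$, we then have $\|X\|_B\leq(1-\eta)^{-1}\varepsilon$, and the identity $a^{-1}\exp(X)a=\exp(\operatorname{Ad}(a^{-1})X)$ gives
\[
d\bigl(a^{-1}\exp(X)a,\,e\bigr)\;\leq\;(1+\eta)\|\operatorname{Ad}(a^{-1})X\|_B\;\leq\;\frac{(1+\eta)e^{-c}}{1-\eta}\,\varepsilon.
\]
Since $c>0$, one can choose $\eta>0$ small enough so that $\alpha:=(1+\eta)e^{-c}/(1-\eta)<1$; this gives $a^{-1}N_{I,\varepsilon}^+a\subset N_{I,\alpha\varepsilon}^+$ as desired, and the same $\alpha$ and $\varepsilon_0$ work for all $a\in A_I^+(c)$.

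The argument is uniform in $a\in A_I^+(c)$ because the contraction factor $e^{-\beta(\log a)}\leq e^{-c}$ is uniform, and the constants in the bi-Lipschitz comparison between $\exp$ and $d$ depend only on the Riemannian metric near $e$. The main (and only real) point is the uniform lower bound on $\beta(\log a)$; once that is in hand, the rest is a standard exponential-map transfer, which is the sort of routine step I would not grind through in detail.
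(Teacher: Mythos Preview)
Your argument is correct and is essentially identical to the paper's proof: both establish the uniform bound $\beta(\log a)\geq c$ from the simple-root expansion, use orthogonality of root spaces to get $\|\operatorname{Ad}(a^{-1})X\|\leq e^{-c}\|X\|$ on $\mathfrak{n}_I^+$, and then transfer this to the group via the bi-Lipschitz property of $\exp$ near the identity. The only cosmetic difference is that the paper parametrizes the bi-Lipschitz constants as $e^{\pm c/3}$ (yielding $\alpha=e^{-c/3}$) rather than your $(1\pm\eta)$.
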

\begin{proof}
  For
$$
X=\sum_{\beta\in \Sigma_\sigma^+, \beta|_{\mathfrak a_{I}}\ne 0}
X_{\beta}\in \mathfrak n_{I}^+,\quad X_\beta\in \mathfrak g_\beta,
$$
we have
$$
\operatorname{Ad}(a^{-1})X=\sum_\beta \operatorname{Ad}(a^{-1})
X_\beta=\sum_\beta e^{-\beta(\log a)}X_\beta.
$$
Note that if $\beta=\sum_{\alpha\in \Delta_\sigma} n_\alpha \alpha\in
\Sigma_\sigma^+$ with $n_\alpha\ge 0$ satisfies
$\beta|_{\mathfrak{a}_{I}}\ne 0$, then $n_{\alpha}\ge 1$ for some
$\alpha \in \Delta_\sigma-I$.  Hence, for $a\in A^+_I(c)$, we have
$\beta(\log a) \ge c$ and
$$
\|\operatorname{Ad}(a^{-1})X_\beta\|\le e^{-c}\|X_\beta\|.
$$
Since the root spaces $\mathfrak g_{\beta}$ are orthogonal to each
other,
\begin{equation}\label{eq:c1}
  \|\operatorname{Ad}(a^{-1})X\|\le e^{-c}\|X\|.
\end{equation}
Since the differential of the exponential map $\exp: {\mathfrak
  n}^{+}_I \to N^{+}_I$ is identity at $0$, we can find a small ball
$U$ at $0$ in $\mathfrak{n}_I^+$ such that for every $Y\in U$.
\begin{equation}\label{eq:c2}
  e^{-c/3}\|Y\|\le d(\exp(Y),e)\le e^{c/3}\|Y\|.
\end{equation}
Note that for $a\in A^+$, we have $\operatorname{Ad}(a^{-1})U\subset
U$.  Combining (\ref{eq:c1}) and (\ref{eq:c2}), we deduce that for
$a\in A^+_I(c)$ and $n=\exp(X)\in \exp(U)$,
\begin{align*}
  d(a^{-1}na,e)&=d(\exp(\operatorname{Ad}(a^{-1})X),e)\le e^{c/3}
  \|\hbox{Ad}(a^{-1})X\|\\
  &\le e^{-2c/3} \|X\| \le e^{-c/3}d(n,e).
\end{align*}
This proves the claim for $N^+_I$. The claim for $N_I^-$ is proved
similarly.
\end{proof}

\begin{lem}\label{eq:ll}
  For $I\subset \Delta_\sigma$ and $\tau>1$, there exists $\e_0>0$
  such that for every $z\in Z_{I,\e_0}$ and $0<\e < \e_0$,
  \begin{equation*}
    z N^{+}_{\e} z^{-1}\subset N^{+}_{\tau \e}\quad\hbox{and}\quad
    z N^{-}_{\e} z^{-1}\subset N^{-}_{\tau \e}.
  \end{equation*}
\end{lem}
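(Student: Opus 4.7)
The plan is to linearize via the exponential map and reduce the statement to continuity of $\Ad$ at the identity. First I would observe that $Z_I = Z_G(A_I)$ normalizes $N^{\pm}_I$: for $X \in \la{z}_I$, the bracket $\ad(X)$ preserves each $\la{a}$-root space, and hence preserves the subspaces $\la n^{\pm}_I$ (which are sums of root spaces). Consequently, for $z \in Z_I$ and $n = \exp(Y) \in N^+_I$ with $Y \in \la n^+_I$, we have $z n z^{-1} = \exp(\Ad(z)Y)$, and the analogous identity holds for $N^-_I$. Thus the lemma reduces to estimating $\|\Ad(z)Y\|$ relative to $\|Y\|$ and translating the estimate across the exponential.

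Next, since $d$ comes from the right-invariant Riemannian metric whose value at $e$ is $B$, and since $\exp\colon\la g\to G$ has differential $\mathrm{id}$ at $0$, for every $\delta>0$ there exists $\e_1>0$ such that
\[
(1-\delta)\|Y\|\le d(\exp Y,\,e)\le (1+\delta)\|Y\|
\qquad \text{whenever }\|Y\|<\e_1.
\]
Moreover, since $\Ad\colon G\to \GL(\la g)$ is continuous and $\Ad(e)=\mathrm{id}$, for any $\eta>0$ there exists $\e_0\in(0,\e_1)$ such that for all $z\in Z_{I,\e_0}$,
\[
\bigl\|\Ad(z)|_{\la n^+_I}\bigr\|_{\mathrm{op}} \le 1+\eta
\quad\text{and}\quad
\bigl\|\Ad(z)|_{\la n^-_I}\bigr\|_{\mathrm{op}} \le 1+\eta.
\]
Combining these bounds, for $n=\exp(Y)\in N^+_{I,\e}$ with $\e<\e_0$,
\[
d(znz^{-1},e) \le (1+\delta)\,\|\Ad(z)Y\|
\le (1+\delta)(1+\eta)\,\|Y\|
\le \frac{(1+\delta)(1+\eta)}{1-\delta}\,d(n,e).
\]
Given the target $\tau>1$, I would simply choose $\delta,\eta$ small enough that the constant $(1+\delta)(1+\eta)/(1-\delta)\le \tau$; this is possible because the left-hand side tends to $1$ as $\delta,\eta\to 0$. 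The argument for $N^-_I$ is identical, and the resulting $\e_0$ (possibly shrunk further) works uniformly for both.

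The main obstacle is only bookkeeping of the constants, namely arranging that the bi-Lipschitz distortion of $\exp$ and the operator-norm distortion of $\Ad(z)$ together stay below $\tau$; there is no real analytic difficulty, as normalization of $\la n^{\pm}_I$ by $\la z_I$, continuity of $\Ad$, and the local bi-Lipschitz property of $\exp$ are all standard. Note that, unlike Lemma \ref{l_contract}, here we do not need any regularity of $a$—only the smoothness of conjugation and the fact that $z$ lies close to the identity—so the argument does not interact with the positive Weyl chamber at all.
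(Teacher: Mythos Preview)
Your proof is correct and follows essentially the same approach as the paper's: note that $Z_I$ normalizes $N^\pm_I$, linearize via $\exp$, use continuity of $\Ad$ at $e$ together with the local bi-Lipschitz property of $\exp$, and chain the three inequalities (the paper just packages the constants as $\tau^{1/3}\cdot\tau^{1/3}\cdot\tau^{1/3}$ instead of your $(1+\delta)(1+\eta)/(1-\delta)$). One small slip in your justification of the normalization: for general $X\in\la z_I$ the operator $\ad(X)$ does \emph{not} preserve each individual $\la a$-root space (if $X\in\la g_\alpha$ it sends $\la g_\beta$ to $\la g_{\alpha+\beta}$), but since $\la z_I$ centralizes $\la a_I$ it does preserve each $\la a_I$-eigenspace and hence $\la n^\pm_I$, which is all you need.
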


\begin{proof}
  It is easy to check that $L_I$ normalizes $N_I^\pm$.

  We can choose $\e_0>0$ so that
  \begin{align*}
    &\|\hbox{Ad}(z)X\|\le \tau^{1/3}\|X\|, & z\in
    Z_{I,\e_0},\; X\in \la{n}^+_I,\\
    \tau^{-1/3}\|X\|\le &d(\exp(X),e)\le \tau^{1/3}\|X\|, & X\in
    \hbox{Ad}(Z_{I,\e_0})\exp^{-1}(N^+_{I,\e_0}).
  \end{align*}
  Then for every $n=\exp(X)\in N^+_{I,\e_0}$,
  \begin{align*}
    d(znz^{-1},e)&=d(\exp(\hbox{Ad}(z)X),e)\le \tau^{1/3}\|\hbox{Ad}(z)X\|\\
    &\le \tau^{2/3}\|X\|\le \tau d(n,e).
  \end{align*}
  This proves the first part of the lemma. The proof of the second
  part is similar.
\end{proof}

\begin{lem}\label{l:pm}
  For $I\subset \Delta_\sigma$ and $\gamma>1$, there exists
  $\varepsilon_0>0$ such that for every $0<\varepsilon
  <\varepsilon_0$,
$$
N^+_{I,\varepsilon}\subset N^-_{I,\gamma\varepsilon}Z_{I,
  \varepsilon}H_{2\gamma \varepsilon} \quad\hbox{and}\quad
N^-_{I,\varepsilon}\subset K_{2\gamma
  \varepsilon}Z_{I,\varepsilon}N^+_{I,\gamma\varepsilon}.
$$
\end{lem}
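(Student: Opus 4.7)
The plan is to derive both inclusions from an additive identity on the Lie algebra---made possible by the involutions $\sigma$ and $\theta$---and then pass to the group by the inverse function theorem, tracking leading-order constants along the way.

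For the first inclusion, fix $X \in \la{n}^+_I$. Since $\sigma|_{\la{a}} = -\mathrm{id}$, one has $\sigma(\la{g}_\alpha) = \la{g}_{-\alpha}$, and in particular $\sigma(X) \in \la{n}^-_I$. I rewrite
\[
X = (-\sigma(X)) + 0 + (X + \sigma(X)) \in \la{n}^-_I \oplus \la{z}_I \oplus \la{h},
\]
the last summand lying in $\la{h}$ because it is $\sigma$-fixed. Since $B^\sigma = B$, the involution $\sigma$ is an isometry of $(\la{g},\|\cdot\|)$, so $\|-\sigma(X)\| = \|X\|$ and $\|X + \sigma(X)\| \leq 2\|X\|$, while the $\la{z}_I$-component vanishes identically at first order.

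To upgrade this to a multiplicative decomposition, choose (as in Lemma~\ref{eq:decomp1}) a subspace $\la{z}_0 \subset \la{z}_I$ with $\la{g} = \la{n}^-_I \oplus \la{z}_0 \oplus \la{h}$, and define $\Psi:\la{n}^-_I \times \la{z}_0 \times \la{h} \to \la{g}$ by $\Psi(Y,Z,W) = \log(\exp Y \cdot \exp Z \cdot \exp W)$. Its differential at the origin is the sum map, which is an isomorphism by the choice of $\la{z}_0$, so $\Psi$ is a local diffeomorphism and the inverse function theorem yields
\[
\Psi^{-1}(X) = \bigl(-\sigma(X),\ 0,\ X + \sigma(X)\bigr) + O(\|X\|^2).
\]
For $n = \exp(X) \in N^+_{I,\varepsilon}$ we have $\|X\| \leq \varepsilon(1 + O(\varepsilon))$, and this writes $n = \exp(Y)\exp(Z)\exp(W)$ with $\|Y\| \leq \varepsilon + O(\varepsilon^2)$, $\|Z\| = O(\varepsilon^2)$, and $\|W\| \leq 2\varepsilon + O(\varepsilon^2)$. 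Converting back to the right-invariant metric $d$, which is bi-Lipschitz to $\|\cdot\|$ near $e$ with constants arbitrarily close to $1$, and choosing $\varepsilon_0$ small enough that the quadratic corrections fit inside the slack $\gamma-1>0$, gives $\exp(Y) \in N^-_{I,\gamma\varepsilon}$, $\exp(Z) \in Z_{I,\varepsilon}$, and $\exp(W) \in H_{2\gamma\varepsilon}$. Note that the sharper bound $\varepsilon$ (rather than $\gamma\varepsilon$) on the $Z_I$-factor is precisely what the vanishing of the leading-order $\la{z}_I$-component buys us.

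The second inclusion follows from the same argument with $\theta$ in place of $\sigma$. Since $B^\theta = B$ and $\theta|_{\la{a}} = -\mathrm{id}$, we have $\theta(\la{n}^-_I) = \la{n}^+_I$ isometrically, and for $X \in \la{n}^-_I$ we write
\[
X = (X + \theta(X)) + 0 + (-\theta(X)) \in \la{k} \oplus \la{z}_I \oplus \la{n}^+_I,
\]
with $\|X + \theta(X)\| \leq 2\|X\|$ and $\|-\theta(X)\| = \|X\|$. The subspace $\la{z}_0' \subset \la{z}_I$ supplied by the second half of Lemma~\ref{eq:decomp1} gives $\la{g} = \la{k} \oplus \la{z}_0' \oplus \la{n}^+_I$, and the analogue of $\Psi$ on this decomposition finishes the argument exactly as before. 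The only real technical point is the careful bookkeeping of the quadratic remainders in the inverse function theorem; there is no deeper obstacle beyond the clean infinitesimal identities afforded by $\sigma$ and $\theta$.
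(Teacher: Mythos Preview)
Your proof is correct and follows essentially the same approach as the paper: the paper also computes the differential of the local inverse of the product map $N_I^-\times\exp(\la{z}_0)\times H\to G$ on $\la{n}_I^+$ as $X\mapsto(-\sigma(X),0,X+\sigma(X))$, uses the $\sigma$-invariance of the metric to get the norm bounds $\|X\|,\,0,\,2\|X\|$ on the components, and then absorbs the higher-order error into the factor $\gamma>1$; the second inclusion is handled by the same argument with $\theta$ in place of $\sigma$. Your formulation via $\Psi=\log\circ(\text{product})\circ(\exp,\exp,\exp)$ is just a reparametrization of the paper's inverse map $f$ by the exponential, and the content is identical.
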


\begin{proof}
  As in the proof of Lemma~\ref{eq:decomp1}, we choose a subspace
  $\mathfrak z_0$ of $\mathfrak z_I$ such that the product map
  $N_I^-\times \exp (\mathfrak z_0) \times H\to G$ is a diffeomorphism
  in a neighborhood of the identity.  Denote by $f$ the local inverse
  the product map:
$$
f=(f_1,f_2,f_3): U\to N_I^-\times\exp (\mathfrak z_0)\times H
$$
where $U$ is a neighborhood of identity in $G$.  For $X\in \mathfrak
n_I^+$, the derivative $(df)_e$ is given by $$(df)_e(X)= (-\sigma(X),
0, X+\sigma(X)) \in \mathfrak n_I^-\oplus \mathfrak z_0\oplus
\mathfrak h.$$ Since the Riemannian metric at identity is invariant
under $\sigma$, we have for $X\in\mathfrak{n}^+_I$,
$$
\|(df_1)_e (X)\|=\|X\|,\quad (df_2)_e =0,\quad \|(df_3)_e (X)\|\le
2\|X\|.
$$
This implies that for sufficiently small $\e>0$,
$$
f(N^+_{I,\varepsilon})\subset N^-_{I,\gamma\varepsilon}\times Z_{I,
  \varepsilon}\times H_{2\gamma \varepsilon}.
$$
This proves the first claim. The proof of the second claim is similar.
\end{proof}

\begin{lem}\label{l_shah}
  For $I\subset \Delta_\sigma$ and $c>0$, there exist $0< \beta<1$ and
  $\varepsilon_0>0$ such that for every $0< \varepsilon,\delta
  <\varepsilon_0$ and $a\in A_I^+(c)$,
 $$ K_\varepsilon Z_{I,\e} a Z_{I,\e} N_{I,\delta}^+
 H_\varepsilon\subset K_{\varepsilon+4\delta} Z_{I,\e+4\delta} a
 Z_{I,\e+4\delta} {N}_{I,\beta \delta}^+ H_{\e+4\delta}.
$$
\end{lem}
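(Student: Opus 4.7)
The plan is to ``cycle'' the $N^+_{I,\delta}$ factor once past $a$ and back, exploiting the two sources of contraction supplied by Lemma \ref{l_contract}: conjugation by $a$ shrinks $N^-_I$ by a factor $\alpha<1$, and conjugation by $a^{-1}$ shrinks $N^+_I$ by the same $\alpha$. Each swap of an $N^\pm_I$-piece past a $Z_I$-piece pays a factor $\tau>1$ (Lemma \ref{eq:ll}), and each invocation of Lemma \ref{l:pm} to convert $N^+\leftrightarrow N^-$ pays a factor $\gamma>1$ on the newly produced nilpotent piece. If we arrange the cycling so that $a^{\pm 1}$ conjugates an $N^\pm_I$-piece twice, the final $N^+$ radius is shrunk by $\alpha^2$ times a bounded product of $\tau$'s and $\gamma$'s, which can be driven strictly below $1$ by taking $\tau,\gamma$ sufficiently close to $1$.

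Concretely, I proceed in four steps. First, apply Lemma \ref{l:pm} to rewrite $N^+_{I,\delta}\subset N^-_{I,\gamma\delta}Z_{I,\delta}H_{2\gamma\delta}$, so the left-hand side is contained in $K_\e Z_{I,\e}\,a\,Z_{I,\e}\,N^-_{I,\gamma\delta}\,Z_{I,\delta}\,H_{\e+2\gamma\delta}$. Second, migrate the $N^-$-piece leftward past both copies of $Z_{I,\e}$ (Lemma \ref{eq:ll}, costing $\tau$ each time) and past $a$ (Lemma \ref{l_contract}, gaining $\alpha$), reaching $K_\e N^-_{I,\alpha\tau^2\gamma\delta}\,Z_{I,\e}\,a\,Z_{I,\e+\delta}\,H_{\e+2\gamma\delta}$. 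Third, convert the $N^-$-piece back with the second half of Lemma \ref{l:pm}, giving $N^-_{I,\alpha\tau^2\gamma\delta}\subset K_{2\gamma\alpha\tau^2\gamma\delta}\,Z_{I,\alpha\tau^2\gamma\delta}\,N^+_{I,\alpha\tau^2\gamma^2\delta}$. Fourth, push the newly produced $N^+$-piece rightward past $Z_{I,\e}$, past $a$ (now using the contraction $a^{-1}N^+_\e a\subset N^+_{\alpha\e}$ of Lemma \ref{l_contract} for a second factor $\alpha$), and past $Z_{I,\e+\delta}$, landing at
\[
K_{\e+2\alpha\tau^2\gamma^2\delta}\,Z_{I,\e+\alpha\tau^2\gamma\delta}\,a\,Z_{I,\e+\delta}\,N^+_{I,\alpha^2\tau^4\gamma^2\delta}\,H_{\e+2\gamma\delta}.
\]
All concatenations of same-subgroup balls use $X_r X_s\subset X_{r+s}$ on $K,Z_I,H,N^\pm_I$, which follows from the triangle inequality for the right-invariant metric.

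It then suffices to set $\beta:=\alpha^2\tau^4\gamma^2$ and choose $\tau,\gamma>1$ close enough to $1$ that $\beta<1$ and that the remaining coefficients $2\alpha\tau^2\gamma^2$, $\alpha\tau^2\gamma$, and $2\gamma$ are each at most $4$; finally, shrink $\e_0$ so that every intermediate enlarged ball lies in the validity range of whichever lemma is applied to it. The principal obstacle is this exponent book-keeping: both invocations of Lemma \ref{l:pm} plus up to four swaps with $Z_I$-factors introduce multiplicative errors $\gamma$ and $\tau$, and the double $\alpha^2$-contraction from passing through $a$ twice must dominate the cumulative expansion $\tau^4\gamma^2$. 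This is the precise role of the strict contraction $\alpha<1$ from Lemma \ref{l_contract}, whose value depends only on $c$, whereas $\tau$ and $\gamma$ may be chosen afterwards as close to $1$ as desired.
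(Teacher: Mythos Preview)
Your argument is correct and follows essentially the same route as the paper's proof: rewrite $N^+_{I,\delta}$ via Lemma~\ref{l:pm}, carry the resulting $N^-$-piece leftward across $Z_{I,\e}aZ_{I,\e}$ (paying two factors $\tau$ and gaining one $\alpha$), convert back to $KZN^+$ via the second half of Lemma~\ref{l:pm}, and push the new $N^+$-piece back to the right to gain a second $\alpha$. The only cosmetic difference is that you merge the two right-hand $Z_I$-factors before the final pass, ending with $\beta=\alpha^2\tau^4\gamma^2$, whereas the paper keeps them separate and obtains $\beta=\tau^5\alpha^2\gamma^2$; both are driven below $1$ by the same choice of $\tau,\gamma$ close to $1$.
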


\begin{proof}
  For simplicity, we write $N^{\pm}_I=N^{\pm}$ and $Z_{I}=Z$.

  Choose $\alpha=\alpha (c)\in (0,1)$ as in Lemma~\ref{l_contract},
  $\gamma\in (1,2)$ so that $\alpha \gamma^2 <1$, and $\tau>1$ so that
  $\tau^5\alpha \gamma^2 <1$.  Let $\varepsilon_0>0$ be such that
  Lemma~\ref{l_contract}, Lemma~\ref{eq:ll}, and Lemma~\ref{l:pm}
  hold.
  Fixing $0<\varepsilon <\varepsilon_0$, let $k_0\in K_\varepsilon$,
  $x_0, y_0\in Z_\varepsilon$, $n_0^+\in {N}_\delta^+$, and $h_0\in
  H_\varepsilon$.  Then \begin{align*}
    & k_0x_0 a y_0 n_0^+ h_0 \\
    =&k_0x_0 a y_0(n^-_1 y_1h_1)h_0 &\text{by Lemma~\ref{l:pm}}\\
    &&\hbox{with $n^-_1\in N^-_{\gamma\delta}, y_1\in Z_{ \delta},
      h_1\in H_{2 \gamma \delta}$} \\
    =&k_0 n^-_2 x_0a y_0y_1h_1h_0
    &\text{by Lemma~\ref{eq:ll} and Lemma~\ref{l_contract}}\\
    && \hbox{with $n^-_2\in N^-_{\tau^2 \alpha \gamma\delta}$} \\
    =&k_0(k_2{x_2}n_2^+)x_0a y_0y_1 h_1h_0  &\text{by Lemma~\ref{l:pm},}\\
    & &\text{with $k_2\in K_{2 \tau^2 \alpha\gamma^2 \delta}, {x}_2\in
      {Z}_{\tau^2 \alpha \gamma \delta}, {n}^+_2\in
      {N}^+_{\tau^2\alpha\gamma^2\delta}$
    }\\
    =& k_0k_2 (x_2 x_0ay_0y_1)n^+_3 h_1h_0
    &\text{by Lemma~\ref{eq:ll} and Lemma~\ref{l_contract}}\\
    && \hbox{with $n^+_3\in N^{+}_{\tau^5 \alpha^2 \gamma^2 \delta}$.}
  \end{align*}
  Since $\tau^5\alpha \gamma^2 <1$, we have
 $$k_0k_2\in K_{\e+ 4 \delta},\;\;
 x_2x_0, y_0y_1 \in Z_{\varepsilon +4 \delta},\;\; n^+_3\in
 N^{+}_{\beta \delta},\;\;h_1h_0\in H_{\e+ 4 \delta}. $$ where
 $\beta=\tau^5 \alpha^2\gamma^2<1$.
\end{proof}

\begin{proof}[Proof of Theorem~\ref{lm:swl}]
  Set $N^{\pm}_I=N^{\pm}$ and $Z_{I}=Z$ for simplicity. In view of
  Remark~\ref{rem:Weyl} without loss of generality we may assume that
  $w=e$.

  We choose $\e_0>0$ so that Lemma~\ref{eq:decomp1} (for some
  $\theta>1$), Lemma~\ref{l_contract}, and Lemma~\ref{l_shah}
  hold. Because of Lemma~\ref{eq:decomp1}, it suffices to show that
  $$
  K_\varepsilon Z_{\e} {N}^+_\varepsilon\cdot a\subset K_{\sigma\e}
  (Z_{\sigma\e}a) H_{\sigma\e}
  $$
  for some $\sigma>1$.  Also by Lemma~\ref{l_contract},
  $$
  K_\varepsilon Z_{\e} {N}^+_\varepsilon\cdot a\subset K_\varepsilon
  (Z_{\e} a Z_\e) {N}^+_\varepsilon H_\e.
  $$
  Now we can apply Lemma~\ref{l_shah} inductively.  We consider $\e>0$
  such that
  \begin{equation}\label{eq:e}
    \varepsilon+\frac{4\varepsilon}{1-\beta}<\varepsilon_0.
  \end{equation}
  Setting $\varepsilon_0=\delta_0=\varepsilon$, we apply
  Lemma~\ref{l_shah} to find
  $$ \varepsilon_{i+1}< \varepsilon_{i}+2\delta_{i},\;\; \delta_{i+1}< \beta\delta_{i}$$
  such that for every $a\in A_I^+(c)$,
  \begin{equation*}
  K_{\varepsilon_i} Z_{\e_i} a Z_{\e_i} {N}^+_{\delta_i}
  H_{\varepsilon_i}\subset
  K_{\varepsilon_{i+1}}Z_{\e_{i+1}} a Z_{\e_{i+1}}
  {N}^+_{\delta_{i+1}} H_{\varepsilon_{i+1}}.
\end{equation*}
Note that
\begin{align*}
  \delta_i &<\varepsilon\beta^i\quad\hbox{and}\quad
  \varepsilon_i<\varepsilon +4\varepsilon\frac{1-\beta^{i}}{1-\beta}.
\end{align*}
Hence by (\ref{eq:e}), $\e_i,\delta_i<\e_0$, and we can continue this
process indefinitely.

It follows that for every $g\in K_\varepsilon (Z_{\e} a Z_\e)
{N}^+_\varepsilon H_\e$, there exist sequences $k_i\in K_{\e_{i}},
x_i, y_i\in Z_{\e_i}, n_i\in N^+_{\delta_i}, h_i\in H_{\e_i}$ such
that $g=k_ix_i a y_i n_i h_i$ for all $i\ge 1$.  Since $\delta_i\to
0$, $n_i\to e$.  Also, passing to a subsequence, we may assume that
$k_i\to k, x_i\to x, y_i\to y, h_i\in h$.  Then
\begin{equation*}
  g=kxayh \subset K_{\rho\e}Z_{\rho\e} a Z_{\rho\e} H_{\rho\e}
\end{equation*}
with $\rho=1+4(1-\beta)^{-1}$. We have decomposition $a=a_1a_2$ where
$a_1\in A_I^+$ and $a_2$ is in the fixed compact set determined by
$c$.  This implies that for some $\tau>1$,
$$
aZ_{\rho \e}a^{-1}\subset Z_{\tau \rho \e},
$$
and the theorem follows.
\end{proof}

\begin{proof}[Proof of Theorem~\ref{th:sw2}] 
  There exists $\zeta>1$ such that $k^{-1}\mathcal{O}_\e
  k\subset\mathcal{O}_{\zeta\e}$ for every $k\in K$. Then for
  $g=kawh\in KA^+\cW H$, we have
$$
\mathcal{O}_\e\cdot g\subset k(\mathcal{O}_{\zeta\e}a)wh.
$$
Due to Remark~\ref{rem:Weyl}, without loss of generality, we may
assume that $w=e$.

Since $M_{I_1}\subset M_{I_2}$ for $I_1\subset I_2$, we may assume
that $J$ is maximal such that $a$ is $(J,c)$-regular. Then $a\in
A_I^+(c)$.  We have the decomposition
\begin{equation}\label{eq:z}
  \la{z}_I=(\la{z}_I\cap \la{k})\oplus (\la{m}_I\cap \la{p}\cap
  \la{q})\oplus \la{a}_I \oplus (\la{z}_I\cap\la{h}) 
\end{equation}
(see \cite[equation (4.24)]{gos}).  Hence, the product map
$$
(Z_I\cap K)\times \exp(\la{m}_I\cap \la{p}\cap\la{q}) \times A_I\times
(Z_I\cap H)\to Z_I
$$
is a diffeomorphism in a neighborhood of identity, and there exists
$\eta>1$ such that for sufficiently small $\e>0$,
$$
Z_{I,\e}\subset (Z_I\cap K)_{\eta\e} \exp(\la{m}_I\cap \la{p}\cap
\la{q})_{\eta\e} A_{I,\eta\e}(Z_I\cap H)_{\eta\e}.
$$
Therefore, it follows from Theorem~\ref{lm:swl} that
$$
\mathcal{O}_\e\cdot a\subset K_{\sigma\e}Z_{\sigma\e} a H_{\sigma
  \e}\subset
K_{(\sigma+\sigma\eta)\e}M_{I,\sigma\eta\e}(A_{I,\sigma\eta \e}
a)H_{(\sigma+\sigma\eta)\e}.
$$
This proves the theorem.
\end{proof}

\begin{proof}[Proof of Theorem~\ref{th:sw}]
  Suppose that in Theorem~\ref{th:sw2} we have $J=\Delta_\sigma$.
  Then $Z=C_G(A)$ is $\sigma$- and $\theta$-invariant, and
$$
\la{z}=(\la{z}\cap \la{k})\oplus (\la{z}\cap \la{p}\cap \la{q}) \oplus
(\la{z}\cap\la{h}).
$$
Since $\la{a}$ is a maximal abelian subspace of $\la{p}\cap\la{q}$,
$\la{z}\cap \la{p}\cap \la{q}=\la{a}$.  Hence, decomposition
(\ref{eq:z}) becomes
$$
\la{z}=(\la{z}\cap \la{k})\oplus \la{a} \oplus (\la{z}\cap\la{h}),
$$
and we complete the proof as in Theorem~\ref{th:sw2}.
\end{proof}

\section{Well-roundedness of sectors $S_T(\Omega,w)$}
First we need a precise description of the
measure on the set 
$$
KM_I(wv_0)\cong KM_I/(M_I\cap wHw\inv).
$$ 

\subsection{Description of a measure on $KM_I/(M_I\cap wHw\inv)$} 
Fix $w\in \cW$. Let $\sigma_w=i_w\circ \sigma \circ i_w\inv$ be the
involution as in Remark~\ref{rem:Weyl}. Then
$\sigma_w\circ\theta=\theta\circ\sigma_w$.  Also the semisimple group $M_I$ is stable
under $\sigma_w$ and $\theta$, and hence $M_I$ admits the generalized
Cartan decomposition (see~\cite[Proposition~4.22]{gos}):
\begin{equation} \label{eq:M_I-Cartan}
  M_I=(M_I\cap K)A^I(M_I\cap wHw\inv)=(M_I\cap K) A^{I,+}\cW_I(M_I\cap wHw\inv),
\end{equation}
where $A^I$ is the orthogonal complement of $A_I$ in $A$ and it is the
Cartan subalgebra of $M_I$ associated to the symmetric pair $(M_I\cap
K,M_I\cap wHw\inv)$, and $A^{I,+}=\{a\in A^I:\alpha(\log a)\geq
0,\,\forall \alpha\in I\}$ is a positive Weyl chamber; and
$\cW_I\subset M_I$ is a set of representatives of the associated Weyl
group, which is generated by the reflections $\{s_\alpha\}_{\alpha\in
  I}$. An invariant measure, say $\lambda$ on $M_I/(M_I\cap wHw\inv)$
is given as follows: for any $f\in C_c(M_I/M_I\cap wHw\inv)$,
\[
\int f d\lambda = \sum_{w_1\in \cW_I}\int_{K\cap M_I}dk \int_{A^{I,+}}
f(kaw_1(M_I\cap wHw\inv))\delta_I(a)\,da
\]
where
\[
\delta_I(a)=\prod_{\alpha\in \Sigma_\sigma^+\cap\langle I\rangle}
(\sinh\alpha(a))^{l_\alpha^+}(\cosh(\alpha))^{l_\alpha^-},
\]
and $l_\alpha^\pm$ denote the dimensions of the $(\pm 1)$-eigenspaces
of $\sigma\theta$ on $\la{g}^\alpha$.
 
Therefore we can identify
\begin{equation}\label{eq:mea}
  KM_I/(M_I\cap wHw\inv )\cong K\times A^{I,+}\times \cW_I,
\end{equation}
and treat $KM_I/(M_I\cap wHw\inv)$ as a product measure space.

On the other hand, once we fix a measurable section $s_1:K/(K\cap
M_I)\to K$ for the natural quotient map, we can identify $K\times
A^{I,+}\times \cW_I$ with $K/(K\cap M_I)\times M_I/(M_I\times
wHw\inv)$.  We consider the measure on $K\times A^{I,+}\times \cW_I$
such that it corresponds to the product of the invariant measures on
the product space $K/(K\cap M_I)\times M_I/(M_I\cap wHw\inv)$, where
the Haar measures on $K$ and $K\cap M_I$ are normalized.  This
measure, in view of \eqref{eq:mea}, will give rise to the integral
$d\bar m$ on $KM_I/(M_I\cap wHw\inv)$ given as follows: for any $f\in
C_c(KM_I/M_I\cap wHw\inv)$,
\begin{equation} \label{eq:dm} \int f(\bar m)d\bar m := \sum_{w_1\in
    \cW_I}\int_{K}dk \int_{A^{I,+}}f(kaw_1(M_I\cap
  wHw\inv))\delta_I(a)\,da.
\end{equation}

\subsection{Volume estimate for the sectors $S_T(\Omega,w)$}
\label{subsec:volume_estimate}
Let $\lambda_\iota$ denote the highest weight for the irreducible
representation $\iota$. We express
\begin{equation} \label{eq:lambda_i}
\lambda_\iota=\sum_{\alpha\in\Delta_\sigma} m_\alpha\alpha
\end{equation}
and the sum of positive roots (with multiplicities)
\begin{equation} \label{eq:rho}
2\rho=\sum_{\alpha\in\Delta_\sigma} u_\alpha\alpha.
\end{equation}

Let $I\subset \Delta_\sigma$. Set
\begin{align}
  a_I&=\max\{\frac{u_\alpha}{m_\alpha}:\, \alpha\in\Delta_\sigma-I\}, 
  \label{eq:a_I}\\
  b_I&=\#\{\alpha\in\Delta_\sigma-I:\, \frac{u_\alpha}{m_\alpha}=a_I\}
  \label{eq:b_I}.
\end{align}

\begin{Prop}\label{p:volume}
  For any $w\in \cW$ and a bounded measurable set $\Omega\subset K
  M_I/(M_I\cap wHw\inv)$ with positive measure and zero boundary
  measure, there exists $C_I(\Omega,w)>0$ such that
  $$ \vol(S_T(\Omega,w))\sim_{T\to\infty} C_I(\Omega,w)\cdot
  T^{a_I}(\log T)^{b_I-1}.
  $$
\end{Prop}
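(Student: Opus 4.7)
The plan is to compute $\vol(S_T(\Omega,w))$ by disintegrating the $G$-invariant measure along the generalized Cartan decomposition $G = KM_I A_I^+\cW H$, and then applying a Laplace-type asymptotic analysis on $A_I^+$. Using the volume formula of \cite{gos} together with the product description of $d\bar m$ in \eqref{eq:dm}, the invariant measure on the piece $KM_IA_I^+wv_0$ of $V$ disintegrates with respect to the parametrization $(\tilde\omega,a_I)\mapsto \tilde\omega a_I w v_0$ as $d\bar m(\tilde\omega)\cdot J(a_I)\,da_I$, where
\[
J(a_I)=\prod_{\alpha\in\Sigma_\sigma^+\setminus\langle I\rangle}(\sinh\alpha(\log a_I))^{l_\alpha^+}(\cosh\alpha(\log a_I))^{l_\alpha^-}.
\]
Thus
\[
\vol(S_T(\Omega,w))=\int_{\tilde\Omega}\int_{A_I^+}\mathbf 1_{B_T}(\tilde\omega a_I wv_0)\,J(a_I)\,da_I\,d\bar m(\tilde\omega).
\]

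Next I would extract multiplicative asymptotics of the two ingredients as $a_I\to\infty$ through the interior of $A_I^+$. Because the roots in $\langle I\rangle$ vanish on $\la{a}_I$ while the remaining positive roots are strictly positive there, the Jacobian satisfies $J(a_I)=C_0\,e^{2\rho(\log a_I)}(1+o(1))$. For the norm, since $\iota$ is irreducible and $v_0$ has a nontrivial component in the highest-weight direction relative to the chamber determined by $w$, one proves
\[
\|\tilde\omega a_I w v_0\|=e^{\lambda_\iota(\log a_I)}\,\varphi(\tilde\omega)\,(1+o(1))
\]
uniformly for $\tilde\omega$ in the bounded set $\tilde\Omega$, where $\varphi$ is a positive continuous function on $\tilde\Omega$.

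Substituting the coordinates $t_\alpha:=\alpha(\log a_I)$ ($\alpha\in\Delta_\sigma\setminus I$) dual to the restricted simple roots and invoking \eqref{eq:lambda_i}--\eqref{eq:rho}, the volume computation reduces, up to lower-order terms, to the model integral
\[
\mathcal I(s):=\int_{t_\alpha\geq 0,\ \sum_\alpha m_\alpha t_\alpha<s}e^{\sum_\alpha u_\alpha t_\alpha}\prod_\alpha dt_\alpha,\qquad s:=\log T.
\]
With $c_\alpha:=u_\alpha/m_\alpha$, the linear functional $\sum u_\alpha t_\alpha$ attains its maximum $s\cdot a_I$ on the face $\mathcal A:=\{\alpha:c_\alpha=a_I\}$, which has dimension $b_I-1$. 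A standard Laplace-type analysis, integrating first in the $t_\beta$-directions for $\beta\notin\mathcal A$ (each contributing a factor $(s(a_I-c_\beta))^{-1}$) and then across $\mathcal A$ in a radial coordinate (contributing $s^{b_I-1}/(s a_I)$ after an integration by parts), yields $\mathcal I(s)\sim C\cdot e^{sa_I}s^{b_I-1}$. Substituting back gives the claim $\vol(S_T(\Omega,w))\sim C_I(\Omega,w)\,T^{a_I}(\log T)^{b_I-1}$, with $C_I(\Omega,w)>0$ because $\bar m(\Omega)>0$ and $\varphi>0$.

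The principal technical obstacle is the uniform norm asymptotic used in the second step. When $a_I$ tends to infinity along directions close to the walls of $A_I^+$ rather than deep in its interior, weights of $\iota$ strictly lower than $\lambda_\iota$ may become competitive with $\lambda_\iota$, so one must use both the irreducibility of $\iota$ and the non-vanishing of the highest-weight component of $wv_0$ to produce the asymptotic with error bounds that are uniform in $\tilde\omega\in\tilde\Omega$. This is the step that genuinely requires the representation-theoretic input and distinguishes the proof from the purely real-analytic Laplace estimate in the final step.
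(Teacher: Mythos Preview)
Your disintegration of the invariant measure agrees with the paper's, and both arrive at $\vol(S_T(\Omega,w))=\int_\Omega\int_{A_I^+}\chi_{B_T}(\tilde\omega\, a\, wv_0)\,\xi_I(a)\,da\,d\bar m$ with your $J$ equal to the paper's $\xi_I$. The gap is in the next step. The asymptotics $J(a_I)\sim C_0\, e^{2\rho(\log a_I)}$ and $\|\tilde\omega a_I wv_0\|\sim e^{\lambda_\iota(\log a_I)}\varphi(\tilde\omega)$ hold only on cones strictly interior to $A_I^+$: near a wall $\{\alpha(\log a_I)=0\}$ the factor $\sinh(\alpha(\log a_I))^{l_\alpha^+}$ is polynomial rather than exponential, and weights in $\lambda_\iota-\Z_{\ge 0}\alpha$ become comparable to $\lambda_\iota$. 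But your own linear-programming step locates the mass of the model integral $\mathcal I(s)$ on the face where $t_\beta=0$ for every $\beta\in\Delta_\sigma\setminus I$ with $u_\beta/m_\beta<a_I$, i.e.\ precisely along those walls. So the substitution producing $\mathcal I(s)$ is unjustified in the very region that dominates, and the constant it yields is not the correct one: the limiting density $\tilde\xi_I$ in the paper keeps the $\sinh$--$\cosh$ factors over $(\Sigma_\sigma^+\cap\langle I_0\rangle)\setminus\langle I\rangle$, which your leading-exponential replacement discards. You flag the norm difficulty in your final paragraph, but $J$ suffers from the same disease and you do not say how either is cured.

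The paper does not attempt a direct Laplace analysis. It invokes \cite[Theorem~6.1]{gos}, applied with $\la a_I$ in place of $\la a$, to obtain a homogeneous limit measure $\eta_{I,w}$ on $W$ with $T^{-a_I}(\log T)^{1-b_I}\int_{A_I^+}f(awv_0/T)\,\xi_I(a)\,da\to\int f\,d\eta_{I,w}$ for $f\in C_c(W)$; it then sandwiches $\chi_{B_1}$ between compactly supported cut-offs and uses the degree-$a_I$ homogeneity of $\eta_{I,w}$ to pass to the indicator, and finally integrates over $\Omega$ by dominated convergence. If you want your route to go through, you would at minimum need to show that the wall region $\{a\in A_I^+:\alpha(\log a)\le c\}$ contributes $O(c)\cdot T^{a_I}(\log T)^{b_I-1}$ to the volume; that is exactly Proposition~\ref{p:walls}, and it is not lighter than the proposition you are proving.
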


\begin{proof}
  From \cite[Theorem~2.5]{hs} (see also \cite{gos}) one deduces that
  a $G$-invariant measure on $G/H$ is given by
  \begin{equation}\label{eq:volume}
    \int_{G/H} f\, d\mu =  
    \sum_{w\in\mathcal{W}}\int_{\bar m\in KM_I/(M_I\cap{w}Hw^{-1})} 
    \int_{a\in A_I^{+}} 
    f(\bar m a w H)\xi_I(a)\,da d\bar m,\quad f\in C_c(G/H),
  \end{equation}
  where $da$ denotes a Haar measure on $A_I$, and $d\bar m$ is
  described in the paragraph following \eqref{eq:mea}, and
  \begin{equation}\label{eq:xi}
    \xi_I(a)=\prod_{\alpha\in\Sigma_\sigma^+-\langle I\rangle}
    \sin(\alpha(\log a))^{l_\alpha^+}\cos(\alpha(\log a))^{l_\alpha^-}.
  \end{equation}
  Here $l_\alpha^\pm$ denote the dimensions of the $(\pm
  1)$-eigenspaces of $\sigma\theta$ in $\la{g}_\alpha$.  We decompose
  $\xi_I$ as a linear combination of functions $\exp(\chi(a))$ where
  $\chi$'s are characters of $A_I$. Note that $2\rho$ is the maximal
  character in this decomposition. In view of
  equations~\eqref{eq:lambda_i},\eqref{eq:rho}, and \eqref{eq:a_I}, we
  define
$$
I_0=I\cup \{\alpha\in\Delta_\sigma-I:\, \frac{u_\alpha}{m_\alpha}<a_I\}.
$$ By the computation using \cite[Theorem 6.1]{gos}, as done in the
proof of \cite[Theorem 6.4]{gos}, applied to $\la{a}_I$ in place of
$\la{a}$, there exists a locally finite measure $\eta_{I,w}$ on $W$
such that for every $f\in C_c(W)$,
\begin{equation}\label{eq:limit}
  \lim_{T\to\infty}\frac{1}{T^{a_I}(\log T)^{b_I-1}}
  \int_{a\in A_I^+} f(awv_0/T)\xi_I(a)\,da = \int_W f\, d\eta_{I,w},
\end{equation}
where the measure $\eta_{I,w}$ can be described as follows: 
\begin{equation} \label{eq:A_I}
\int_W f\,d\eta_{I,w} =\int_{\bar b\in D^+} f(b (wv_0)^{I_0})\,
\tilde \xi_I(b)\,d\bar b,
\end{equation}
where $D^+=\exp{\la{d}^+}$, 
\[
\la{d}^+=\{\bar b\in \la{a}_I/(\la{a}_{I_0}\cap \ker\rho):\alpha(b)\geq
0,\,\forall\alpha\in I_0\}, 
\]
$d\bar b$ denotes the Haar measure on $A_I/(A_I\cap\exp(\ker\rho))$,
$v_0^{I_0}$ is the projection of $v_0$ to the sum of the weight spaces
with weights of the form $\lambda_\iota-\sum_{\alpha\in I_0}
m_\alpha\alpha$, $m_\alpha\ge 0$, and
\begin{equation} \label{eq:xi-tilde}
\tilde\xi_I(b)=\left( \prod_{\alpha\in(\Sigma_\sigma^+\cap \langle
    I_0\rangle) -\langle I\rangle} \sin(\alpha(\log
  b))^{l_\alpha^+}\cos(\alpha(\log b))^{l_\alpha^-}\right)
\cdot\exp\left(\sum_{\alpha\in \Sigma_\sigma^+ - \langle I_0\rangle} 
u_\alpha\alpha(\log b) \right).
\end{equation}
Moreover it follows from \eqref{eq:limit} that $\eta_{I,w}$ is a
   homogeneous measure of degree $a_I$.

  Fix any $m\in KM$. Let $c>1$ and take a continuous function
  $\psi: [0,\infty]\to [0,1]$ such that $\supp(\psi)\subset [0,c]$ and
  $\psi=1$ on $[0,1]$.  Setting $f(y)=\psi(\|my\|)$, we have
 \begin{equation} \label{eq:A_I-B_T}
  \int_{A^+_I} \chi_{B_T}(mav_0)\xi_I(a)da \le \int_{A^+_I}
  f(awv_0/T)\xi_I(a)da.
 \end{equation}
Now by \eqref{eq:limit} and \eqref{eq:A_I-B_T},
$$
\limsup_{T\to\infty}\frac{1}{T^{a_I}(\log T)^{b_I-1}} \int_{A_I^+}
\chi_{B_T}(mawv_0)\xi_I(a)da \le \int_W f\, d\eta_{I,w}\le c^{a_I}
\eta_{I,w}(m\inv B_1).
$$
The lower estimate for $\liminf$ is proved similarly.

Hence, taking $c\to 1^+$, we obtain
\begin{equation} \label{eq:A_I-plus}
\lim_{T\to\infty}\frac{1}{T^{a_I}(\log T)^{b_I-1}} \int_{A_I^+}
\chi_{B_T}(mawv_0)\xi_I(a)da = \eta_{I,w}(m\inv B_1).
\end{equation}

In view of \eqref{eq:mea} let $s:KM_I/(M_I\cap wHw\inv)\to KM_I$
denote the measurable section of the obvious quotient map. 
Since
$$
S_T(\Omega,w)=\Omega A_I^+wv_0\cap B_T,
$$
\begin{equation}
  \label{eq:S_T}
  \hbox{Vol}(S_T(\Omega,w))=\int_{\bar m\in \Omega} \int_{a\in A_I^+}
  \chi_{B_T}(s(\bar m)awv_0)\xi(a)\,da d\bar m. 
\end{equation}
Therefore  from \eqref{eq:A_I-plus}, using the dominated convergence
theorem, we deduce that
\begin{align}\label{eq:asympt}
  C_I(\Omega,w):=\lim_{T\to\infty}\frac{\vol(S_T(\Omega,w))}{T^{a_I}(\log
    T)^{b_I-1}} = \int_{\bar m\in \Omega} \eta_{I,w}(s(\bar
  m)^{-1}B_1)d\bar m.
\end{align}
Note that there exists $\delta>0$ such that $s(\bar m)^{-1} B_1\supset
B_{\delta}$ for all $\bar m\in \Omega$, and because $\eta_{I,w_0}$ is
homogeneous, $\eta_{I,w}(B_\delta)>0$.  Hence $C_I(\Omega,w)>0$.
\end{proof}

\begin{rem} 
The value of the parameter $C_I(\Omega,w)$ in  the statement of
Proposition~\ref{p:volume} is given by
\begin{equation} \label{eq:C_I-Omega}
C_I(\Omega,w)=\nu_{I_0,w}(B_1\cap \Omega D^+(wv_0)^{I_0}),
\end{equation}
where $\nu_{I_0,w}$ is a $G$-invariant measure on the $G$-orbit $G(wv_0)^{I_0}$.

\bigskip
\rm
This formula can be justified as follows: combining \eqref{eq:limit},
  \eqref{eq:A_I}, \eqref{eq:A_I-plus}, \eqref{eq:asympt} and
  \eqref{eq:dm} we get
\begin{align} 
  C_I(\Omega,w) & =\int_{\bar m\in\Omega} d\bar m \int_{\bar b\in D^+}
  \chi_{B_1}(\bar m\bar b(wv_0)^{I_0})\xi_I(b)\,d\bar b \nonumber\\
  & =\int_{k\in K}dk\int_{a\in A^{I,+}}\int_{\bar b\in
    D^+}\chi_{\Omega}(ka)\chi_{B_1}(kab(wv_0)^{I_0})\delta_I(a)\tilde\xi_I(b)\,
  da d\bar b, \label{eq:C_I}
\end{align}
where 
\begin{equation} \label{eq:deta_I}
\delta_I(a)=\prod_{\alpha\in(\Sigma_\sigma^+\cap \langle
    I\rangle)} \sin(\alpha(\log
  a))^{l_\alpha^+}\cos(\alpha(\log a))^{l_\alpha^-},
\end{equation}
$l_\alpha^\pm$ are the dimensions of the $(\pm 1)$-eigenspaces of
$\sigma\theta$ acting on $\la{g}^\alpha$. 

Since
$$
\la{a}_{I_0}\cap \ker\rho=\la{a}_{I_0}\cap \ker\lambda_\iota,
$$
it follows from \cite[Theorem~5.1]{gos} that the orbit $G(wv_0)^{I_0}$
supports a $G$-invariant measure $\nu_{I_0}$. Now comparing the
formula \eqref{eq:C_I} with the formula (5.3) in \cite[Theorem~5.1]{gos}, we
obtain \eqref{eq:C_I-Omega}. 

\end{rem}
\subsubsection{Upper estimate of volume for $(J,c)$-singular elements
  in $S_T(\Omega,w)$}

For $c>0$, $I\subset \Delta_\sigma$, and a bounded
measurable $\Omega\subset KM_I$, we set
$$
V_{I,w}(c)=\left\{m a wv_0:\, m\in\Omega,\, a\in A_I^+\hbox{ with
    $\alpha(\log a)\le c$ for some
    $\alpha\in\Delta_\sigma-I$}\right\}.
$$
Note that this set is the set of $(J,c)$-singular elements for $J=\Delta_{\sigma}\setminus I$. 

\begin{Prop}\label{p:walls}
  For small $c>0$ and sufficiently large $T>0$,
$$
\vol(V_{I,w}(c)\cap B_T)\ll c\cdot T^{a_I}(\log T)^{b_I-1}.
$$
\end{Prop}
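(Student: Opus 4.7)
The plan is to mirror the asymptotic volume computation in the proof of Proposition~\ref{p:volume}, while keeping track of the extra constraint $\beta(\log a)\le c$ that defines the singular region. By the polar integration formula underlying~\eqref{eq:S_T}, the volume of $V_{I,w}(c)\cap B_T$ equals
$$
\int_{\bar m\in\Omega}\int_{\{a\in A_I^+\,:\,\beta(\log a)\le c\text{ for some }\beta\in\Delta_\sigma-I\}}\chi_{B_T}(s(\bar m)awv_0)\,\xi_I(a)\,da\,d\bar m.
$$
A union bound over $\beta\in\Delta_\sigma-I$ then reduces the problem to showing that, for each such $\beta$ and uniformly in $\bar m$ ranging over the bounded set $\Omega$,
$$
J_\beta(T):=\int_{\{a\in A_I^+\,:\,\beta(\log a)\le c\}}\chi_{B_T}(s(\bar m)awv_0)\,\xi_I(a)\,da\ll c\cdot T^{a_I}(\log T)^{b_I-1}.
$$

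To estimate $J_\beta(T)$, I would parametrize $A_I^+$ by the coordinates $t_\alpha=\alpha(\log a)\ge 0$ for $\alpha\in\Delta_\sigma-I$, and expand $\xi_I(a)$ as a finite linear combination of exponentials $e^{\chi(\log a)}$ whose characters $\chi$ are dominated by $2\rho=\sum_\alpha u_\alpha\alpha$. Since $M_I$ commutes with $A_I$ and $s(\bar m)$ varies in a fixed bounded subset of $KM_I$, the highest-weight analysis of~\cite{gos} yields a constant $C_0=C_0(\Omega)$ such that $\|s(\bar m)awv_0\|<T$ forces $\sum_\alpha m_\alpha t_\alpha\le\log T+C_0$. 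After these reductions, $J_\beta(T)$ is bounded, up to a multiplicative constant independent of $\bar m$, by
$$
\int_0^c dt_\beta\int_{\substack{t_\alpha\ge 0,\,\alpha\ne\beta\\ \sum_\alpha m_\alpha t_\alpha\le\log T+C_0}}\exp\Bigl(\sum_\alpha u_\alpha t_\alpha\Bigr)\prod_{\alpha\ne\beta}dt_\alpha.
$$

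The final step is a Laplace-type evaluation of the inner integral, identical in nature to the one underlying Proposition~\ref{p:volume}: after rescaling $x_\alpha=m_\alpha t_\alpha$, the leading exponential rate is $a_I=\max_\alpha u_\alpha/m_\alpha$, achieved on the set $S=\{\alpha\in\Delta_\sigma-I\,:\,u_\alpha/m_\alpha=a_I\}$ of cardinality $b_I$. If $\beta\notin S$, removing the $\beta$-coordinate leaves all $b_I$ maximizing directions in play and the inner integral is $\ll T^{a_I}(\log T)^{b_I-1}$; if $\beta\in S$, one of the $b_I$ maximizing directions is now confined to $[0,c]$ and the inner integral becomes $\ll T^{a_I}(\log T)^{b_I-2}$. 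In either case, since $e^{u_\beta t_\beta}$ is bounded for $t_\beta\in[0,c]$, integration over $t_\beta$ contributes the desired factor $c$, giving $J_\beta(T)\ll c\cdot T^{a_I}(\log T)^{b_I-1}$. The main obstacle I anticipate is the uniformity of the constant $C_0$ and of the implicit constants as $\bar m$ varies over $\Omega$; this will be handled by the same sandwiching argument that appeared in Proposition~\ref{p:volume}, namely choosing $\delta>0$ with $B_\delta\subset s(\bar m)^{-1}B_1\subset B_{1/\delta}$ for all $\bar m\in\Omega$ and invoking the homogeneity of degree $a_I$ of $\eta_{I,w}$.
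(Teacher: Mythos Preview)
Your proposal is correct and shares the overall scaffolding with the paper's proof: both start from the polar integration formula~\eqref{eq:volume}, apply a union bound over $\beta\in\Delta_\sigma-I$, and handle the $\bar m$-dependence uniformly via an inclusion $s(\bar m)^{-1}B_T\subset B_{\delta T}$ for $\bar m$ in the bounded set $\Omega$.

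The difference is in how the core one-variable estimate
\[
\int_{\{a\in A_I^+:\,\beta(\log a)\le c,\ \|awv_0\|<T\}}\xi_I(a)\,da
\]
is bounded. The paper does not compute this integral directly; instead it invokes the limit formula~\eqref{eq:limit} from \cite{gos} to replace the growing integral by the fixed integral $\int_{A_I^+\cap U_c(\beta)} f(av_0^{I_0})\tilde\xi_I(a)\,da$ times $T^{a_I}(\log T)^{b_I-1}$, and then uses properness of $a\mapsto av_0^{I_0}$ to confine $a$ to a fixed compact set $L$; the factor $c$ then drops out as $\vol(L\cap U_c(\beta))\ll c$. Your route is more elementary: you bound $\xi_I(a)\ll e^{2\rho(\log a)}$, enlarge the norm-ball constraint to the simplex $\lambda_\iota(\log a)\le\log T+C_0$ via the highest-weight projection, and evaluate the resulting exponential integral over a truncated simplex by a direct Laplace computation, with the case split $\beta\in S$ versus $\beta\notin S$. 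This avoids citing the limit measure $\eta_{I,w}$ and the properness argument, at the cost of redoing by hand the asymptotic that~\eqref{eq:limit} encapsulates. One small caveat: your phrase ``$(\log T)^{b_I-2}$'' in the case $\beta\in S$ is only literally correct when $b_I\ge 2$; when $b_I=1$ the remaining directions have strictly smaller exponential rate, so the inner integral is $\ll T^{a'}$ for some $a'<a_I$, which is still comfortably within the stated bound.
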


\begin{proof}
  For $\alpha\in\Delta_\sigma$, set
$$
U_c(\alpha)=\{a\in A_I^+: \alpha(\log a)\le c\}.
$$ 
There exists $\delta>1$ such that $m^{-1}B_T\subset B_{\delta T}$
for all $T>0$.  By (\ref{eq:volume}), this gives the estimate
\begin{align}\label{eq:perp}
  \vol(V_{I,w}(c)\cap B_T) &\ll \sum_{\alpha\in \Delta_\sigma-I} \int_{a\in
    A_I^+\cap U_c(\alpha): \|a v_0\|<\delta T} \xi_I(a)da.
\end{align}
Now we use the volume computation from \cite{gos} (see the proof of
Theorem 6.4 in \cite{gos}) to show for every nonnegative $f\in
C_c(W)$,
$$
\int_{A_I^+\cap U_c(\alpha)} f(av_0/T)\xi_I(a)da \ll
\left(\int_{A_I^+\cap U_c(\alpha)} f(a
  v^{I_0})\tilde{\xi}_I(a)da\right)\cdot T^{a_I}(\log T)^{b_I-1},
$$ where $I\subset I_0\subset \Delta_\sigma$, $v^{I_0}\in W$ and
$\tilde\xi_I\in C(A^+)$ are as defined in
section~\ref{subsec:volume_estimate}. By \cite[Corollary~4.7]{gos} the
projection of $v_0^{I_0}$ on the $\lambda_\iota$-eigenspace is
nonzero, and the map $A^+\to\R: a\mapsto \lambda_\iota(a)$ is
proper. Therefore the map $A_I^+\to W:a\mapsto av_0^{I_0}$ is proper.
This implies that there exists a compact $L\subset A^+_I$ such that
\[
L\supset \{a\in A^+_I:av_0^{I_0}\in\supp f\}.
\]
Then
\begin{align*}
  \int_{A_I^+\cap U_c(\alpha)} f(av_0/T)\xi_I(a)da &\ll \max (f)\cdot
  \vol(L\cap U_c(\alpha))
  \cdot T^{a_I}(\log T)^{b_I-1}\\
  &\ll_f c\cdot T^{a_I}(\log T)^{b_I-1}.
\end{align*}
Taking a function $f$ satisfying $\chi_{B_1}\le f $, we obtain
\begin{align*}
  \int_{a\in A_I^+\cap U_c(\alpha): \|av_0\|<T} \xi_I(a)da &\ll
  \left(\int_{A_I^+\cap U_c(\alpha)}
    f(a v)\tilde{\xi}_I(a)da\right)\cdot T^{a_I}(\log T)^{b_I-1}\\
  &\ll_f c\cdot T^{a_I}(\log T)^{b_I-1}.
\end{align*}
Therefore, by (\ref{eq:perp}),
\begin{align*}
  \vol(V_{I,w}(c)\cap B_T) & \ll c\cdot (\delta T)^{a_I}(\log (\delta
  T))^{b_I-1}.
\end{align*}
This completes the proof.
\end{proof}

The following corollary of Theorem~\ref{th:sw2} will be used in the
proof of Theorem~\ref{cor:counting}:

\begin{cor}\label{c:3}
  Let $\Delta_\sigma=I\sqcup J$ and $B$ be a bounded subset of $KM_I$.
  Then given $c>0$, there exist $\ell>1$ and $\e_0>0$ such that for
  every $(J,c)$-regular $g=bah\in B A_IH$ and $0<\e<\e_0$,
$$
\Cal O_\e g \subset ( K\cap \mathcal{O}_{\ell\e})b (M_I\cap
\mathcal{O}_{\ell\e})(A_I\cap \mathcal{O}_{\ell\e})aH.
$$
\end{cor}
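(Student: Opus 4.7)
The plan is to deduce Corollary \ref{c:3} from Theorem \ref{th:sw2} by treating the bounded factor $b$ as a uniform Lipschitz perturbation. Because $B\subset G$ is bounded, there exists a constant $\zeta>1$ independent of $b\in B$ such that $b^{-1}\mathcal O_\e b\subset\mathcal O_{\zeta\e}$ for every sufficiently small $\e>0$; this is standard Lipschitz continuity of conjugation on a relatively compact subset of a Lie group. Given $y\in\mathcal O_\e$, the identity $yg=b\,(b^{-1}yb)\,ah$ displays $yg$ as $b$ times an $\mathcal O_{\zeta\e}$-perturbation of the element $ah$. Since $ah\in KA_IH$ is $(J,c)$-regular with trivial $K$- and $\cW$-parts (legitimate by Remark \ref{rem:Weyl}), Theorem \ref{th:sw2} provides constants $\ell_0>1$ and $\e_0'>0$ such that whenever $\zeta\e<\e_0'$,
\[
(b^{-1}yb)\,ah\in (K\cap\mathcal O_{\ell_0\zeta\e})(M_I\cap\mathcal O_{\ell_0\zeta\e})(A_I\cap\mathcal O_{\ell_0\zeta\e})\,a\,(H\cap\mathcal O_{\ell_0\zeta\e})\,h.
\]
Absorbing $(H\cap\mathcal O_{\ell_0\zeta\e})h\subset H$, this yields
\[
yg\in b\cdot(K\cap\mathcal O_{\ell_0\zeta\e})(M_I\cap\mathcal O_{\ell_0\zeta\e})(A_I\cap\mathcal O_{\ell_0\zeta\e})\cdot aH.
\]

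The remaining task is to commute the small $K$-factor past $b$ so that it appears on the left. For $k'\in K\cap\mathcal O_{\ell_0\zeta\e}$, the identity $bk'=(bk'b^{-1})\,b$ produces $bk'b^{-1}\in\mathcal O_{\zeta^2\ell_0\e}$, a small element of $G$ that need not lie in $K$. To decompose this remainder in the required form, one applies Theorem \ref{th:sw2} a second time, now with $(bk'b^{-1})\cdot ah$ as the perturbed element; the resulting factorization of $bk'b^{-1}$ into (small $K$)(small $M_I$)(small $A_I$)(small $H$) contributes its non-$K$ part to the existing small-$M_I$ and small-$A_I$ neighborhoods, while the $H$-piece is absorbed into the terminal $aH$ (using $(A_I\cap\mathcal O_{\ell\e})(H\cap\mathcal O_{\ell\e})a\subset (A_I\cap\mathcal O_{\ell\e})a\,(a^{-1}(H\cap\mathcal O_{\ell\e})a)\cdot H\subset(A_I\cap\mathcal O_{\ell\e})a H$, with $a^{-1}Ha\subset H$ absorbed straightforwardly, or, if necessary, via a finite iteration mirroring Lemma \ref{l_shah}). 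The small $K$-piece thus produced emerges on the left of $b$, giving the inclusion
\[
\mathcal O_\e g\subset(K\cap\mathcal O_{\ell\e})\,b\,(M_I\cap\mathcal O_{\ell\e})(A_I\cap\mathcal O_{\ell\e})\,aH
\]
for some $\ell>1$ and $\e_0>0$ depending only on $c$ and $B$.

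The main technical obstacle is the second step: although the tangent-space span $\la g=\la k+\la m_I+\la a_I+\operatorname{Ad}(a)\la h$, which is the infinitesimal content of Theorem \ref{th:sw2}, shows that \emph{infinitesimally} one can swap a small $K$-factor past $b$ with only small $M_I$- and $A_I$-corrections plus an $aHa^{-1}$-piece, converting this infinitesimal decomposition to a uniform finite-level statement requires careful tracking of the Lipschitz constants as conjugation by $b$ enters the iteration. The boundedness of $B$ is precisely what supplies the uniform constant $\zeta$ that makes the iterative construction of Lemma \ref{l_shah} converge to constants $\ell$ and $\e_0$ independent of $b\in B$.
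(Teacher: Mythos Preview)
Your first step---conjugating the perturbation past $b$ and applying Theorem~\ref{th:sw2} to $ah$---correctly yields
\[
yg \in b\cdot(K\cap\mathcal O_{\ell_0\zeta\e})(M_I\cap\mathcal O_{\ell_0\zeta\e})(A_I\cap\mathcal O_{\ell_0\zeta\e})\,aH.
\]
The genuine gap is the second step, where you try to commute the small $K$-factor $k'$ to the left of $b$. Writing $bk'=(bk'b^{-1})b$ and applying Theorem~\ref{th:sw2} to $(bk'b^{-1})\cdot ah$ gives $bk'b^{-1}=k''m''a''\cdot(ah''a^{-1})$ with $h''$ small in $H$; the trailing factor $ah''a^{-1}$ is neither small nor in $H$, because $a$ does \emph{not} normalize $H$ (your assertion ``$a^{-1}Ha\subset H$'' is false: since $\sigma(a)=a^{-1}$, conjugation by $a$ moves $H$ off itself). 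Nor can one factor $bk'b^{-1}$ by a local argument at the identity, since in general $\la k+\la m_I+\la a_I+\la h\ne\la g$; already for $G=\SL_2(\R)$, $H=K$, $I=\emptyset$ the sum is $\la k+\la a$, which misses a dimension---this is precisely the failure noted in the paper before Theorem~\ref{th:sw2}. Finally, an iteration ``mirroring Lemma~\ref{l_shah}'' cannot work here: that lemma relies on the contraction $a^{-1}N_{I,\e}^+a\subset N_{I,\alpha\e}^+$ with $\alpha<1$, whereas there is no analogous contraction for the $K$-ball you are commuting; each pass through your loop enlarges it by the Lipschitz constant of conjugation by $b$.

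The paper sidesteps this by decomposing $b$ \emph{before} invoking Theorem~\ref{th:sw2}. Write $b=km$ with $k\in K$, $m\in M_I$ (so $m$ lies in the bounded set $KB\cap M_I$), and then use the Cartan decomposition~\eqref{eq:M_I-Cartan} of $M_I$ to write $m=k_0a_0h_0$ with $k_0\in M_I\cap K$, $a_0\in A^I$, $h_0\in M_I\cap H$. Since $a_0$ and $h_0$ centralize $A_I$, one has $g=(kk_0)(a_0a)(h_0h)\in KAH$, and Theorem~\ref{th:sw2} applies with $K$-part $kk_0$, so the small $K$-ball already sits to the left of $k$. It then remains only to move the small $M_I$-ball rightward past $k_0$ (a $K$-conjugation preserving $M_I$) and past $m$ (a bounded $M_I$-conjugation), and to commute $a_0,h_0$ back through the $A_I$-ball---all uniform Lipschitz moves. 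The ingredient you are missing is this preliminary Cartan decomposition of the $M_I$-component of $b$.
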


\begin{proof}
  Let $b=km$ for $k\in K$ and $m\in M_I$. Note that $m\in KB\cap M_I$,
  which is bounded. By \eqref{eq:M_I-Cartan} there exist $k_0\in
  M_I\cap K$, $a_0\in A^I$ and $h_0\in M_I\cap H$ such that
  $m=k_0a_0h_0$.  By Theorem~\ref{th:sw2},
$$
\Cal O_\e g\subset ( K\cap \mathcal{O}_{\ell\e})kk_0 (M_I\cap
\mathcal{O}_{\ell\e})(A_I\cap \mathcal{O}_{\ell\e})a_0aH.
$$
There exists $\sigma>1$ such that for every $k\in K$ and small $\e>0$,
$k\mathcal{O}_\e k^{-1}\subset \mathcal{O}_{\sigma \e}$. Hence,
$$
\Cal O_\e g \subset ( K\cap \mathcal{O}_{\ell\e})k (M_I\cap
\mathcal{O}_{\sigma \ell\e})k_0a_0h_0(A_I\cap \mathcal{O}_{\ell\e})aH.
$$
There exists $\eta>1$ such that for every $m\in KB$ and small $\e>0$,
$m^{-1}\mathcal{O}_\e m\subset \mathcal{O}_{\eta \e}$. Hence,
$$
\Cal O_\e g \subset ( K\cap \mathcal{O}_{\ell\e})km (M_I\cap
\mathcal{O}_{\eta\sigma \ell\e})(A_I\cap \mathcal{O}_{\ell\e})aH
$$
as required.
\end{proof}

\begin{proof}[Proof of Theorem~\ref{cor:counting}] Due to
  Remark~\ref{rem:Weyl} without loss of generality, we may assume that
  $w=e$. We will denote $S_T(\Omega,e)$ by $S_T(\Omega)$. 
  
  Let $c,\e\in (0,1)$.

  Let $s:KM_I/(M_I\cap H)\to KM_I$ be a measurable section such that
  $s(\Omega)$ is bounded and measurable.  For neighborhoods $U_1$ of
  $e$ in $K$ and $U_2$ of $e$ in $M_I$ , we set
  \begin{align*}
    \Omega^+&=U_1 s(\Omega) U_2(M_I\cap H),\\
    \Omega^- &=\bigcap_{u_1\in U_1,u_2\in U_2} u_1
    s(\Omega)u_2(M_I\cap H).
  \end{align*}
  One can check that as $U_1$ and $U_2$ shrink to $\{e\}$, we have
$$
\Omega^+\,\downarrow\, \bar\Omega\quad\quad\hbox{and}\quad\quad
\Omega^-\,\uparrow\, \hbox{int}(\Omega).
$$
Since $\vol(\partial\Omega)=0$, we have $\vol(\Omega^+-\Omega^-)\to
0$.  Hence, it follows from (\ref{eq:asympt}) that we can choose $U_1$
and $U_2$ so that
\begin{equation}\label{eq:sigma}
  C_I(\Omega^+)-C_I(\Omega^-)<\e.
\end{equation}
Fix a set $\tilde \Omega\supset \Omega$ such that $\bar\Omega\subset
\hbox{int}(\tilde\Omega)$, set
$$
V_I=\Omega A_I^+v_0\quad\hbox{and}\quad \tilde V_I=\tilde\Omega
A_I^+v_0,
$$ 
and define $V_I(c)=V_{I,e}(c)$ and $\tilde V_I(c)=\tilde V_{I,e}(c)$ as in
Proposition~\ref{p:walls}. We can choose $U_1$ and $U_2$ so that
$\Omega^+\subset \tilde\Omega$.

We claim that there exists a neighborhood $\mathcal{O}'$ of $e$ in $G$
such that
\begin{equation}\label{eq:claim}
  \mathcal{O}'\cdot S_T(\Omega)\subset S_{(1+\e)T}(\Omega^+)\cup
  (\tilde V_I(c)\cap B_{(1+\e)T}).
\end{equation}
By Corollary~\ref{c:3}, there exists a neighborhood $\mathcal{O}_1$
such that
$$
\mathcal{O}_1^{-1}\cdot (V_I-\tilde V_I(c))\subset \tilde
V_I-V_I({c/2}).
$$
This implies that
$$
\mathcal{O}_1\cdot V_I({c/2})\subset \tilde V_I(c).
$$
Also, by Corollary~\ref{c:3} and continuity of operator norm, there
exists a neighborhood $\mathcal{O}_2$ of $e$ in $G$ such that for
every $v=m av_0\in V_I-V_I(c/2)$,
$$
\mathcal{O}_2v\subset (U_1 m U_2)A_I^+v_0
$$
and
$$
\mathcal{O}_2\cdot B_T\subset B_{(1+\e)T}.
$$
Hence,
$$
\mathcal{O}_2\cdot (S_T(\Omega)-V_I({c/2})) \subset
S_{(1+\e)T}(\Omega^+).
$$
Setting $\mathcal{O}'=\mathcal{O}_1\cap \mathcal{O}_2$, we deduce the
claim (\ref{eq:claim}).

Similar argument shows there exists a neighborhood $\mathcal{O}''$ of
$e$ in $G$ such that
\begin{equation}\label{eq:claim2}
  S_{(1-\e)T}(\Omega^-)\subset \left( \bigcap_{g\in
      \mathcal{O}''} gS_T(\Omega)\right) \cup \tilde V_I(c).
\end{equation}

Combining (\ref{eq:claim}) and (\ref{eq:claim2}), we deduce that for
$\mathcal{O}=\mathcal{O}'\cap\mathcal{O}''$,
\begin{align}\label{eq:est}
  &\vol(\mathcal{O}\cdot \partial S_T(\Omega)) \le
  \vol\left(\mathcal{O}S_T(\Omega) - \cap_{g\in \mathcal{O}}
    gS_T(\Omega)\right)\\
  &\le \vol(S_{(1+\e)T}(\Omega^+))-
  \vol(S_{(1-\e)T}(\Omega^-))+\vol(\tilde V_I(c)\cap B_{(1+\e)T}).
  \nonumber
\end{align}
By Proposition~\ref{p:walls},
$$
\limsup_{T\to\infty} \frac{\vol(\tilde V_I(c)\cap
  B_{(1+\e)T})}{T^{a_I}(\log T)^{b_I-1}}\ll c.
$$
By Proposition~\ref{p:volume},
\begin{align*}
  \lim_{T\to\infty} \frac{\vol(S_{(1+\e)T}(\Omega^+))}{T^{a_I}(\log
    T)^{b_I-1}}
  &= (1+\e)^{a_I}C_I(\Omega^+),\\
  \lim_{T\to\infty} \frac{\vol(S_{(1-\e)T}(\Omega^-))}{T^{a_I}(\log
    T)^{b_I-1}} &= (1-\e)^{a_I}C_I(\Omega^-).
\end{align*}
Hence, it follows from (\ref{eq:est}) and (\ref{eq:sigma}) that
\begin{align*}
  \limsup_{T\to\infty} \frac{\vol(\mathcal{O}\cdot \partial
    S_T(\Omega))}{T^{a_I}(\log T)^{b_I-1}}
  &\ll (1+\e)^{a_I}C_I(\Omega^+)-(1-\e)^{a_I}C_I(\Omega^-)+c\\
  &\ll \e+c.
\end{align*}
Since $\e$ and $c$ can be taken arbitrary small, this proves that the
family of sets $S_T(\Omega)$ is well-rounded.  Hence, it follows from
\cite{drs,em} that
$$
\#(\Gamma v_0\cap S_T(\Omega))\sim_{T\to\infty} \vol(S_T(\Omega)).
$$
This proves the theorem.
\end{proof}

\begin{proof}[Proof of Theorem~\ref{th:quad}] 
  To deduce Theorem~\ref{th:quad} from Theorem~\ref{cor:counting}, we
  observe that (see \cite[\S2.3]{gos}):
$$
\mathcal{Q}_W\simeq\bigcup_{p+q=d}
\hbox{SL}_{d}(\mathbb{R})/\hbox{SO}(p,q),\quad d=\dim W,
$$
and $\hbox{SL}_d(\mathbb{R})/\hbox{SO}(p,q)$ is an affine symmetric
space. We set
\begin{align*}
  G&=\SL_{d}(\mathbb{R}),\\
  K&=\SO(d),\\
  A&=\{\diag(s_1,\ldots,s_d): s_i\in\mathbb{R}^+,\,s_1\cdots s_d=1\},\\
  H&=\SO(p,q).
\end{align*}
Then we have the generalized Cartan decomposition $G=KAH$. The set of
simple roots is
\begin{gather*}
\Delta_\sigma=\{\alpha_i(s)=s_is_{i+1}^{-1}: i=1,\ldots,d-1\}, \quad
\text{and} \\
A^+=\{\diag(s_1,\dots,s_d):s_1>\dots>s_d>0\}.
\end{gather*}

In view of \eqref{eq:W} and \eqref{eq:N_T}, set 
\[
i_k=\sum_{i=1}^k \dim W_i, \quad 1\leq k\leq n.
\]
Let $I=\Delta_{\sigma}\setminus
\{\alpha_{i_1},\dots,\alpha_{i_n}\}$. Then
$$
M_I\simeq \SL_{i_1}(\mathbb{R})\times
\SL_{i_2-i_1}(\mathbb{R})\times \cdots
\times\SL_{d-i_n}(\mathbb{R}),
$$
and $A_I$ is the centralizer of $M_I$ in $A$.

Since the set of integral quadratic forms in the question is a finite
union of $\SL_d(\mathbb{Z})$-orbits, we conclude that the proof of the
theorem reduces to the computation of the asymptotics of
$\#(\SL_d(\mathbb{Z}) \mbq_0\cap S_T(\Omega\Omega'))$ where
$\mbq_0\in\mathcal{Q}_W(\mathbb{Z})$.  This shows that
Theorem~\ref{th:quad} is a particular case of
Theorem~\ref{cor:counting}; it may be noted that since $d\geq 3$ the
subgroups $\SO(p,q)$ are semisimple and $\SO(p,q)\cap\SL-d(\Z)$ is a
lattice in $\SO(p,q)$.

It remains to compute the parameters $a_I$ and $b_I$, which are
determined by the volume asymptotics in Proposition~\ref{p:volume}.

If we restrict the character $2\rho$, which is the sum of all roots in
$\Sigma_\sigma^+$, then we get
$$ 
\rho|_{{\text{Lie}(A_I)}}=\sum_{k=1}^n u_{i_k} \alpha_{i_k}, \quad \text{where
  $u_{i_k}=i_k(d-i_k)$.}
$$
The highest weight, say $\lambda_\iota$, of the representation of $\hbox{SL}_d(\mathbb{R})$
on the space of quadratic forms restricted to Lie($A_I$) is
$$
\lambda_\iota|_{\text{Lie}(A_I)}=\sum_{k=1}^n m_{i_k}\alpha_{i_k}\quad \text{where $m_i=2(d-i_k)/d$.}
$$
By \eqref{eq:a_I} and \eqref{eq:b_I},
\begin{align*}
  a_I&=\max\left\{\frac{u_{i_k}}{m_{i_k}}:\, 1\leq k\leq n\right\}=di_n/2,\\
  b_I&=\#\left\{i_k:\,1\leq k\leq n,\,
    \frac{u_{i_k}}{m_{i_k}}=a_I\right\}=1.
\end{align*}
This proves the theorem.
\end{proof}

\section{Another version of the strong wavefront lemma}\label{sec:last}

 
In this section, we obtain a version of the strong wavefront lemma
for a generalized Cartan decomposition with a different Weyl chamber
$\tilde A^+$ defined below.

Let $G^{\sigma\theta}=\{g\in G:\sigma\theta(g)=g\}$, the symmetric
subgroup associated to the involution $\sigma\theta$ of $G$ and
$\la{g}^{\sigma\theta}$ be the associated Lie subalgebra. Then $A$ is
the maximal $\R$-split Cartan subalgebra of $G^{\sigma\theta}$.
Set 
\[
\tilde\Sigma_{\sigma,\theta}
=\{\alpha\in\Sigma_\sigma:\la{g}^\alpha\cap\la{g}^{\sigma\theta}\neq\{0\}\}
\quad\hbox{and}\quad\tilde\Sigma_{\sigma,\theta}^+=\Sigma_\sigma^+\cap
\tilde\Sigma_{\sigma,\theta}.
\]
Then $\tilde\Sigma_{\sigma,\theta}$ is
a root system on $A$, and we denote by  $\tilde \Delta_{\sigma,\theta}\subset
\tilde\Sigma_{\sigma,\theta}^+$ the set of simple roots on $A$.
Let $\tilde{A^+}$ denote the associated closed Weyl chamber of
$A$. Then $A^+\subset \tilde{A^+}$. Also the following generalized
Cartan decomposition holds:
\[
G=K\tilde{A}^+H.
\]
Note that $\tilde A^+\ne A^+$ in general (see \cite[p. 109]{hs}).

Given $c>0$, an element $g=kah\in K\tilde{A^+}H$ is called $c$-regular for
$\tilde\Delta_{\sigma,\theta}$ if $\alpha(a)>c$ for all
$\alpha\in\tilde\Delta_{\sigma,\theta}$. 

\begin{thm}[Strong wavefront Lemma-III] \label{th:sw3} Given $c>0$, there exist
  $\ell>1$ and $\e_0>0$ such that for every $g=kah\in K\tilde{A^{+}}H$
  which is $c$-regular for $\tilde{\Delta}_{\sigma,\theta}$ and
  every $0<\e<\e_0$, 
\[
\cO_\e g\subset (K\cap \cO_{\ell \e})k \cdot (A\cap \cO_{\ell \e})a\cdot
(H\cap\cO_{\ell \e})h.
\]
\end{thm}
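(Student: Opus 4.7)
The plan is to adapt the proof scheme of Theorem~\ref{th:sw} from Section~\ref{sec:wave}, replacing the root system $\Sigma_\sigma$ by its subsystem $\tilde\Sigma_{\sigma,\theta}$. Define
\[
\tilde{\la n}^{\pm} = \bigoplus_{\alpha\in \tilde\Sigma^{+}_{\sigma,\theta}} \la g^{\pm\alpha},\qquad \tilde{\la z} = \la z \oplus \bigoplus_{\alpha\in \Sigma^{+}_\sigma\setminus \tilde\Sigma^{+}_{\sigma,\theta}}(\la g^\alpha\oplus\la g^{-\alpha}),
\]
so that $\la g=\tilde{\la n}^{-}\oplus \tilde{\la z} \oplus \tilde{\la n}^{+}$, and let $\tilde N^\pm$, $\tilde Z$ be the corresponding exponential images near the identity. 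With these definitions, the analogs of Lemmas~\ref{eq:decomp1}--\ref{l_shah} and of Theorem~\ref{lm:swl} should go through in the same way: I would first verify the local decomposition $\la g=\tilde{\la n}^-+\tilde{\la z}+\la h$; then establish the contraction estimate $a^{-1}\tilde N^+_\varepsilon a\subseteq \tilde N^+_{\alpha\varepsilon}$ with $\alpha<1$ (here the $c$-regularity of $a$ for $\tilde\Delta_{\sigma,\theta}$ yields $\beta(\log a)\ge c$ for every $\beta\in\tilde\Sigma^{+}_{\sigma,\theta}$, since positive roots are non-negative integer combinations of simple ones); and iterate via the analog of Lemma~\ref{l_shah} to obtain $G_\varepsilon\cdot a\subseteq K_{\varsigma\varepsilon}\tilde Z_{\varsigma\varepsilon}\,a\, H_{\varsigma\varepsilon}$.

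The crucial new input over Theorem~\ref{th:sw} is the decomposition
\[
\tilde{\la z} \subseteq (\tilde{\la z}\cap \la k)+\la a+(\tilde{\la z}\cap \la h),
\]
which absorbs $\tilde Z$ into the $K$-$A$-$H$ decomposition. For the centralizer piece $\la z$, this is standard: since $\la z$ is both $\sigma$- and $\theta$-invariant and $\la z\cap\la p\cap\la q=\la a$, one has $\la z=(\la z\cap \la k)+\la a+(\la z\cap \la h)$. For each $\alpha\in\Sigma^+_\sigma\setminus \tilde\Sigma^+_{\sigma,\theta}$, the defining condition $\la g^\alpha\cap \la g^{\sigma\theta}=0$ forces $\sigma|_{\la g^\alpha}=-\theta|_{\la g^\alpha}$, and consequently every $X\in \la g^\alpha$ decomposes as $X=\tfrac12(X+\theta X)+\tfrac12(X-\theta X)$ with $\tfrac12(X+\theta X)\in \la k\cap \la q$ and $\tfrac12(X-\theta X)=\tfrac12(X+\sigma X)\in \la p\cap \la h$; no $\la a$-component appears because $\alpha\ne 0$. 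This yields
\[
\tilde Z_\varepsilon \subseteq (K\cap\mathcal{O}_{\eta\varepsilon})(A\cap \mathcal{O}_{\eta\varepsilon})(H\cap\mathcal{O}_{\eta\varepsilon})
\]
for some $\eta>1$, which combined with the intermediate inclusion completes the proof of Theorem~\ref{th:sw3}.

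The main technical obstacle is that $\tilde{\la z}$ is not in general a Lie subalgebra (for $\alpha,\beta\in \Sigma^+_\sigma\setminus\tilde\Sigma^+_{\sigma,\theta}$ the bracket $[\la g^\alpha,\la g^\beta]$ can land in a root space whose root lies in $\tilde\Sigma^+_{\sigma,\theta}$), so $\tilde Z$ is only a smooth submanifold rather than a closed subgroup. This complicates the analog of Lemma~\ref{eq:ll}, which relied on $Z_I$ normalizing $N_I^\pm$ as subgroups. I would circumvent this either by working in exponential coordinates throughout, so that the needed conjugation estimates hold infinitesimally at the identity (which is all that is used in the iterative shrinking argument of Lemma~\ref{l_shah}), or by splitting the argument into two stages: first treat the honest subgroup $Z=Z_G(\la a)$ as in Theorem~\ref{lm:swl}, and then absorb the residual factor $\exp\bigl(\bigoplus_{\Sigma^+_\sigma\setminus\tilde\Sigma^+_{\sigma,\theta}}(\la g^\alpha\oplus \la g^{-\alpha})\bigr)$ directly into $K$ and $H$ using the $\sigma=-\theta$ identity above.
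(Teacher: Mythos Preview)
Your algebraic observation that $\sigma=-\theta$ on $\la{g}^\alpha$ whenever $\alpha\notin\tilde\Sigma_{\sigma,\theta}$ is correct and is essentially the content of the paper's Lemma~\ref{l:sum-conj}. However, the overall strategy has a gap that neither of your workarounds repairs.

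The difficulty is the final absorption step. Suppose you have reached $G_\e a\subset K_{\varsigma\e}\tilde Z_{\varsigma\e}\,a\,H_{\varsigma\e}$ and then expand $\tilde Z_{\varsigma\e}\subset K_{\eta\e}A_{\eta\e}H_{\eta\e}$. The resulting $H_{\eta\e}$ factor sits to the \emph{left} of $a$, and moving it to the right requires conjugation by $a^{-1}$. For $Y=X+\sigma X\in\la{h}$ with $X\in\la{g}^\alpha$ and $\alpha\in\Sigma_\sigma^+\setminus\tilde\Sigma_{\sigma,\theta}$, one has $\Ad(a^{-1})Y=e^{-\alpha(\log a)}X+e^{\alpha(\log a)}\sigma X$; since the $c$-regularity hypothesis constrains only roots in $\tilde\Sigma_{\sigma,\theta}$, the value $\alpha(\log a)$ is unbounded over the set of $c$-regular elements of $\tilde{A^+}$, and the conjugate admits no uniform bound. (The analogous step in the proof of Theorem~\ref{th:sw2} works only because there $Z_I$ centralizes $A_I\ni a$.) Your workaround (b), falling back on Theorem~\ref{lm:swl} with the honest centralizer, fails for a related reason: that theorem needs $a\in A_I^+(c)\subset A^+$, whereas $a\in\tilde{A^+}$ may lie outside $A^+$ entirely. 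The normalization issue you flag for Lemma~\ref{eq:ll} is real but secondary; even granting the intermediate inclusion, the last step does not close.

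The paper takes a different route. It derives Theorem~\ref{th:sw3} as the case $\tilde I=\emptyset$ of Theorem~\ref{th:sw4}, which is proved as follows. Pick $w\in\cW$ with $b:=waw^{-1}\in A^+$, set $I=\{\alpha\in\Delta_\sigma:\alpha(\log b)<c/n_0\}$, and apply the already-established Theorem~\ref{th:sw2} for the involution $\sigma_w$; this produces a small factor in $M_I^w:=w^{-1}M_Iw$ in place of your $\tilde Z$. Lemma~\ref{l:sum-conj} then gives $\la{g}=\la{q}\oplus(\la{k}\cap\la{h})\oplus\Ad a_1(\la{p}\cap\la{h})$ for every $a_1\in A$, and its Corollary~\ref{cor:open-conj} says that for $a_1$ in a \emph{bounded} region of $A$ one has $\cO_\e a_1\subset K_{\ell\e}\cdot\exp(\la{p}\cap\la{q})_{\ell\e}\cdot a_1\cdot\exp(\la{p}\cap\la{h})_{\ell\e}$. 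Applied inside $M_I^w$ to the $A\cap M_I^w$-component of $a$ (on which, by the choice of $I$, all eigenvalues of $\ad(\log a)$ are bounded by $c$), this places the $H$-factor directly on the correct side of $a$ with uniform constants. The moral is that your $\sigma=-\theta$ decomposition is only safe on a piece of the group where $a$ acts with bounded eigenvalues, and the Weyl conjugation together with the choice of $I$ is precisely what isolates such a piece.
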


This result is stronger than Theorem~\ref{th:sw} because any
$c$-regular element is also $c$-regular for
$\tilde\Delta_{\sigma,\theta}$, but the converse
implication does not hold in general.

Now we consider the situation involving singular elements.  Let
$\tilde I\subset\tilde \Delta_{\sigma,\theta}$ and $\tilde J=\tilde
\Delta_{\sigma,\theta}\setminus \tilde I$. For $c>0$, we say that an
element $g=kah\in KA^+H$ is $(\tilde J,c)$-regular if $\alpha(\log
a)>c$ for all $\alpha\in \tilde J$. Let $A_{\tilde I}=\exp(\ker \tilde
I)$.  Let $M^{\sigma\theta}_{\tilde I}$ denote the analytic semisimple
subgroup of $G^{\sigma\theta}$ whose Lie algebra is generated by
$\la{g}^{\pm\beta}\cap \la{g}^{\sigma\theta}$ for all $\beta\in
\Sigma^+_{\sigma,\theta}\cap \langle \tilde I\rangle$.  Then
$M^{\sigma\theta}_{\tilde I}$ is contained in the centralizer of
$A_{\tilde I}$, and
\[
G=KM^{\sigma\theta}_{\tilde I}A_{\tilde I}^+H,
\]
where $A_{\tilde I}^+=\tilde{A^+}\cap A_{\tilde I}$.

\begin{thm}[Strong wave front lemma-IV]\label{th:sw4}
  Given $c>0$, there exist $\ell>1$ and $\e_0>0$ such that for every
  $\tilde I\subset\tilde \Delta_{\sigma\theta}$, $\tilde J=\tilde
  \Delta_{\sigma,\theta}\setminus\tilde I$, $g=kah\in
  K\tilde{A^+}H$ which is $(\tilde J,c)$-regular,  and $0<\e<\e_0$,
  \[
  \Cal O_\e\cdot g \subset ( K\cap \mathcal{O}_{\ell\e})k\cdot
  (M^{\sigma\theta}_{\tilde I}\cap \mathcal{O}_{\ell\e})\cdot
  (A_{\tilde I}\cap \mathcal{O}_{\ell\e})a\cdot (H\cap
  \mathcal{O}_{\ell \e})h.
  \]
\end{thm}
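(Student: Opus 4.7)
The proof will follow the template of the proof of Theorem~\ref{th:sw2}: first establish an analog of Theorem~\ref{lm:swl} adapted to the Cartan decomposition $G=K\tilde{A^+}H$, and then refine it into Theorem~\ref{th:sw4} via a decomposition of the centralizer $\tilde Z_{\tilde I}:=Z_G(A_{\tilde I})$. By an argument analogous to Remark~\ref{rem:Weyl}, we may also use the opportunity to simplify notation by absorbing the Weyl element; the strong regularity hypothesis controls everything through $a$.

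Concretely, for $\tilde I\subset\tilde\Delta_{\sigma,\theta}$ and $c>0$, set
\[
\tilde A^+_{\tilde I}(c)=\{a\in \tilde A^+: \beta(\log a)\ge c \text{ for }\beta\in\tilde\Delta_{\sigma,\theta}\setminus\tilde I,\ \beta(\log a)<c\text{ for }\beta\in\tilde I\},
\]
and define
\[
\tilde{\la n}^\pm_{\tilde I}=\bigoplus_{\beta\in\tilde\Sigma^+_{\sigma,\theta},\ \beta|_{\la a_{\tilde I}}\ne 0}\la g_{\pm\beta}.
\]
The plan is to show that there exist $\e_0>0$ and $\varsigma>1$ with $G_\e\cdot a\subset K_{\varsigma\e}\tilde Z_{\tilde I,\varsigma\e}a H_{\varsigma\e}$ for all $0<\e<\e_0$ and $a\in\tilde A^+_{\tilde I}(c)$, by replaying the arguments of Lemmas~\ref{eq:decomp1}--\ref{l_shah} with $(\la n^\pm_I,\la z_I)$ replaced by $(\tilde{\la n}^\pm_{\tilde I},\tilde{\la z}_{\tilde I})$. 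The contraction property of Lemma~\ref{l_contract} carries over because $\beta(\log a)\ge c$ for every $\beta\in\tilde\Sigma^+_{\sigma,\theta}$ with $\beta|_{\la a_{\tilde I}}\ne 0$ and $a\in \tilde A^+_{\tilde I}(c)$, and the Iwasawa-type splittings of Lemma~\ref{eq:decomp1} and Lemma~\ref{l:pm} still hold because $\sigma(\la g_\beta)=\la g_{-\beta}$ for all $\beta$, so the identity $x=(x+\sigma(x))-\sigma(x)\in \la h+ \tilde{\la n}^-_{\tilde I}$ yields $\la g=\tilde{\la n}^-_{\tilde I}+\tilde{\la z}_{\tilde I}+\la h$. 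The root spaces $\la g^\beta$ for $\beta\in\Sigma_\sigma\setminus\tilde\Sigma_{\sigma,\theta}$ play no role in the definition of $\tilde{\la n}^\pm_{\tilde I}$ and are silently absorbed into $\tilde{\la z}_{\tilde I}$.

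The final step, and the main technical obstacle, is to decompose $\tilde Z_{\tilde I}$ as a local product of its intersections with $K$, $M^{\sigma\theta}_{\tilde I}$, $A_{\tilde I}$ and $H$, mirroring \eqref{eq:z}. The subgroup $M^{\sigma\theta}_{\tilde I}\subset G^{\sigma\theta}$ (rather than the larger $M_{\tilde I}$ of Theorem~\ref{th:sw2}) arises because $\tilde A^+$ corresponds to the Weyl group of $\tilde\Sigma_{\sigma,\theta}$, whose reflections are realized inside $G^{\sigma\theta}$. The root spaces $\la g^\beta$ with $\beta\in\Sigma_\sigma\cap\langle\tilde I\rangle$ but $\beta\notin\tilde\Sigma_{\sigma,\theta}$ lie in the $(-1)$-eigenspace of $\sigma\theta$, equal to $(\la k\cap\la q)\oplus(\la p\cap\la h)$; these split cleanly between $\tilde{\la z}_{\tilde I}\cap\la k$ and $\tilde{\la z}_{\tilde I}\cap\la h$, leaving only the roots in $\tilde\Sigma_{\sigma,\theta}\cap\langle\tilde I\rangle$ to be absorbed by $M^{\sigma\theta}_{\tilde I}$. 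The hard part will be verifying rigorously that the product map
\[
(\tilde Z_{\tilde I}\cap K)\times M^{\sigma\theta}_{\tilde I}\times A_{\tilde I}\times(\tilde Z_{\tilde I}\cap H)\to \tilde Z_{\tilde I}
\]
is a local diffeomorphism at the identity, so that the bi-Lipschitz estimate on a neighborhood of $e$ kicks in; once this is in place, Theorem~\ref{th:sw4} follows from the analog of Theorem~\ref{lm:swl} exactly as Theorem~\ref{th:sw2} followed from Theorem~\ref{lm:swl}. Specializing to $\tilde I=\emptyset$ recovers Theorem~\ref{th:sw3}.
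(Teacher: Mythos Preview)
There is a genuine gap in your handling of the root spaces $\la g_\beta$ with $\beta\in\Sigma_\sigma\setminus\tilde\Sigma_{\sigma,\theta}$. You assert that they are ``silently absorbed into $\tilde{\la z}_{\tilde I}$,'' but this is incompatible with your own definition $\tilde Z_{\tilde I}=Z_G(A_{\tilde I})$: the Lie algebra of the centralizer consists only of those $\la g_\gamma$ with $\gamma|_{\la a_{\tilde I}}=0$, whereas a root $\beta\in\Sigma_\sigma\setminus\tilde\Sigma_{\sigma,\theta}$ typically has $\beta|_{\la a_{\tilde I}}\ne 0$ (take $\tilde I=\emptyset$, so $\la a_{\tilde I}=\la a$, and no nonzero root restricts to zero). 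Hence the would-be decomposition $\la g=\tilde{\la n}^-_{\tilde I}\oplus\tilde{\la z}_{\tilde I}\oplus\tilde{\la n}^+_{\tilde I}$ fails, and with it the analogs of Lemma~\ref{eq:decomp1} and Lemma~\ref{l:pm}: the sum $\tilde{\la n}^-_{\tilde I}+\tilde{\la z}_{\tilde I}+\la h$ need not equal $\la g$. Nor can the extra root spaces be pushed into $\tilde{\la n}^\pm_{\tilde I}$ while preserving Lemma~\ref{l_contract}: for $a\in\tilde A^+$ and $\beta\notin\tilde\Sigma_{\sigma,\theta}$ the value $\beta(\log a)$ is uncontrolled, since $\tilde A^+\supsetneq A^+$ precisely because some $\Sigma_\sigma^+$-roots are allowed to be negative on it. Your later remark that these root spaces lie in the $(-1)$-eigenspace of $\sigma\theta$, hence in $(\la k\cap\la q)\oplus(\la p\cap\la h)$, is correct but does not repair the iteration in Lemma~\ref{l_shah}, which relies on the triple $(\tilde{\la n}^-_{\tilde I},\tilde{\la z}_{\tilde I},\tilde{\la n}^+_{\tilde I})$ exhausting $\la g$.

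The paper avoids this obstacle by taking a different route: it \emph{reduces} to the already-proven Theorem~\ref{th:sw2} rather than rerunning the machinery for $\tilde\Sigma_{\sigma,\theta}$. Given $a\in\tilde A^+$ one chooses $w\in\cW$ with $b=waw^{-1}\in A^+$, sets $I=\{\alpha\in\Delta_\sigma:\alpha(\log b)<c/n_0\}$ (where $n_0$ bounds the height of any root), and applies Theorem~\ref{th:sw2} to the involution $\sigma_w$. Conjugating back produces an $(M_I^w\cap\cO_{\ell\e})$-factor, and the new ingredient that handles it is Lemma~\ref{l:sum-conj} and its Corollary~\ref{cor:open-conj}: for $a_1$ in a fixed compact piece of $A$ one has $\cO_\e a_1\subset (K\cap\cO_{\ell\e})(\exp(\la p\cap\la q)\cap\cO_{\ell\e})a_1(\exp(\la p\cap\la h)\cap\cO_{\ell\e})$. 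Applying this inside $M_I^w$ and then checking $M_I^w\cap\exp(\la p\cap\la q)\subset M^{\sigma\theta}_{\tilde I}$ (via the $(\tilde J,c)$-regularity and the choice of $n_0$) finishes the argument. The point is that conjugation into $A^+$ tames all of $\Sigma_\sigma$ at once, and the problematic root spaces are then dealt with by a bounded-$a$ decomposition rather than by any contraction estimate.
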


This result strengthens Theorem~\ref{th:sw2}.

\begin{lem} \label{l:sum-conj} For any $a\in A$,
  \[
  \la{g}=\la{q}\oplus(\la{k}\cap\la{h}) \oplus \Ad a(\la{p}\cap\la{h}).
  \]
\end{lem}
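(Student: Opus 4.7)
The plan is to reduce the claim to a direct-sum statement via a dimension count, and then to use the commuting involutions $\sigma$ and $\theta$ together with the root-space decomposition for $\la{a}$ to prove directness.

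First I would observe that since $\sigma$ and $\theta$ commute, $\la{g}$ admits the refined four-fold decomposition
\[
\la{g} = (\la{k}\cap\la{h})\oplus (\la{k}\cap\la{q})\oplus (\la{p}\cap\la{h})\oplus (\la{p}\cap\la{q}),
\]
so that $\dim \la{q} + \dim(\la{k}\cap\la{h}) + \dim(\la{p}\cap\la{h}) = \dim \la{g}$. Because $\Ad a$ is a linear automorphism, $\dim \Ad a(\la{p}\cap\la{h}) = \dim(\la{p}\cap\la{h})$; hence it suffices to show that the sum $\la{q} + (\la{k}\cap\la{h}) + \Ad a(\la{p}\cap\la{h})$ is direct.

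For directness, suppose $X + Y + \Ad a(Z) = 0$ with $X \in \la{q}$, $Y \in \la{k}\cap\la{h}$, $Z \in \la{p}\cap\la{h}$. Split $X = X_k + X_p$ with $X_k \in \la{k}\cap\la{q}$ and $X_p \in \la{p}\cap\la{q}$. Since $\la{a} \subset \la{p}\cap\la{q}$, both $\sigma$ and $\theta$ invert elements of $A$, so $\sigma\,\Ad a = \Ad(a\inv)\sigma$ and $\theta\,\Ad a = \Ad(a\inv)\theta$; combined with $\sigma(Z) = Z$ and $\theta(Z) = -Z$, applying each of $1,\sigma,\theta,\sigma\theta$ to the relation yields four linear equations in the vectors $X_k, X_p, Y, \Ad(a^{\pm 1})Z$. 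Adding the equations coming from $1$ and $\sigma\theta$ cancels everything except $2X_p + 2Y = 0$; since $\la{p}\cap\la{q}$ and $\la{k}\cap\la{h}$ are complementary eigenspaces for $(\sigma,\theta)$, this forces $X_p = 0$ and $Y = 0$. Adding the equations coming from $1$ and $\sigma$ then yields $\Ad a(Z) + \Ad(a\inv)(Z) = 0$.

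To conclude I would decompose $Z = \sum_{\alpha} Z^\alpha$ with $Z^\alpha \in \la{g}^\alpha$ (over $\alpha\in\Sigma_\sigma\cup\{0\}$). Since $\Ad(a^{\pm 1})$ acts on $\la{g}^\alpha$ by the positive scalar $e^{\pm\alpha(\log a)}$, the identity $\Ad a(Z) + \Ad(a\inv)(Z) = 0$ gives $\bigl(e^{\alpha(\log a)} + e^{-\alpha(\log a)}\bigr)Z^\alpha = 0$ componentwise, forcing $Z = 0$. Feeding $X_p = Y = Z = 0$ back into the original relation then gives $X_k = 0$, and the sum is direct.

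The only mildly tricky step is the bookkeeping in the second paragraph: one must check that the four equations produced by $1,\sigma,\theta,\sigma\theta$ really separate $X_k$ from $X_p$ and from the $\Ad a(Z)$ term, which relies critically on $\la{a}\subset \la{p}\cap\la{q}$ (so that $\sigma\theta$ fixes $A$ pointwise while $\sigma$ and $\theta$ each invert it). Once this is set up, the root-space argument and the dimension count are immediate, so I do not expect any serious obstacle.
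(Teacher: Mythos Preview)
Your proof is correct and follows essentially the same approach as the paper: both reduce to directness via a dimension count, then exploit the relations $\sigma(a)=\theta(a)=a^{-1}$ to separate the components and arrive at an equation equivalent to $(\Ad a)^2 Z = -Z$, which is impossible since $\Ad a$ has positive eigenvalues. The only cosmetic difference is that the paper invokes self-adjointness of $\Ad a$ for this last step, whereas you work directly with the root-space decomposition.
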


\begin{proof} Since $\la{g}=\la{q}\oplus (\la{k}\cap\la{h})\oplus
  (\la{p}\cap\la{h})$, it is enough to to show that
  \[
  \Ad a(\la{p}\cap\la{h})\cap \left(\la{q}+(\la{k}\cap\la{h})\right)=\{0\}.
  \]
To prove this, let $X\in\la{p}\cap\la{h}$ such that $\Ad
  a(X)\in\la{q}\oplus (\la{k}\cap\la{h})$. Therefore,
  \begin{align*}
    \sigma(\Ad a(X))&=\Ad \sigma(a) (\sigma(X))=(\Ad a)\inv (X),\\
    \theta(\Ad a(X))&=\Ad \theta(a) (\theta(X))=(\Ad a)\inv (-X),
  \end{align*}
and 
\begin{equation}\label{eq:l41}
\sigma(\Ad a(X))=-\theta(\Ad a(X))=(\Ad a)\inv (X).
\end{equation}
  Now we write $\Ad a(X)=Y_1+Y_2+Y_3$, where $Y_1\in
  \la{q}\cap\la{k}$, $Y_2\in\la{q}\cap\la{p}$, and $Y_3\in
  \la{k}\cap\la{h}$. Then
  \begin{align*}
    \sigma(\Ad a(X))&=-Y_1-Y_2+Y_3,\\
    \theta(\Ad a(X))&=Y_1-Y_2+Y_3,
  \end{align*}
  and it follows from (\ref{eq:l41}) that $Y_2=0$ and $Y_3=0$. Hence, $\Ad
  a(X)\in\la{k}\cap\la{q}$ and $\sigma(\Ad a(X))=-\Ad a(X)$. Then by
  (\ref{eq:l41}),
$$
(\Ad a)^2(X)=-X.
$$
If $X\ne 0$, this gives a contradiction because $\Ad a$ is self-adjoint.
\end{proof}

As a consequence of the above lemma, we obtain the following:

\begin{cor} \label{cor:open-conj} Given $c>0$ there exist $\ell>1$
  and $\e_0>0$ such that for any $a\in A$ such that
  $\abs{\alpha(\log a)}\leq c$ for all $\alpha\in\Delta_\sigma$, and
  any $0<\e<\e_0$, we have
  \[
  \cO_\e a \subset (\cO_{\ell\e}\cap
  K)(\cO_{\ell\e}\cap\exp(\la{p}\cap\la{q}))a(\cO_{\ell\e}\cap\exp(\la{p}\cap\la{h})).
  \]
  \qed
\end{cor}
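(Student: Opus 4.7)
The plan is to show that the product map
\[
\Phi_a \colon K \times \exp(\la{p}\cap\la{q}) \times \exp(\la{p}\cap\la{h}) \to G, \qquad (k,m,h) \mapsto kmah,
\]
is a local diffeomorphism near $(e,e,e)$ whose inverse is bi-Lipschitz with constants that are uniform as $a$ ranges over the compact set $C_c$ consisting of all $a\in A$ with $|\alpha(\log a)|\leq c$ for every $\alpha\in\Delta_\sigma$. The corollary then follows at once by reading off, for each $g\in\cO_\e a$, its preimage under $\Phi_a$.

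The first step is to upgrade Lemma~\ref{l:sum-conj} to the direct sum
\[
\la{g} = \la{k} \oplus (\la{p}\cap\la{q}) \oplus \Ad a(\la{p}\cap\la{h}).
\]
This is a formal consequence of the lemma together with the splittings $\la{q} = (\la{k}\cap\la{q}) \oplus (\la{p}\cap\la{q})$ and $\la{k} = (\la{k}\cap\la{q}) \oplus (\la{k}\cap\la{h})$, which give the identity $\la{q}\oplus(\la{k}\cap\la{h}) = \la{k}\oplus(\la{p}\cap\la{q})$.

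The second step is to compute $(d\Phi_a)_{(e,e,e)}$. Identifying $T_a G$ with $\la{g}$ via right-translation by $a^{-1}$ and using the Leibniz rule on the curves $t\mapsto \exp(tX)\exp(tY)a\exp(tZ)$, one finds
\[
(d\Phi_a)_{(e,e,e)}(X,Y,Z) = X + Y + \Ad a(Z), \qquad X\in\la{k},\ Y\in\la{p}\cap\la{q},\ Z\in\la{p}\cap\la{h}.
\]
By the first step, this is a linear isomorphism onto $\la{g}$.

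The third step is a uniform inverse function theorem argument. Since $\Phi_a$ depends smoothly on $a$ and, on the compact set $C_c$, the operator norms of $(d\Phi_a)_{(e,e,e)}$ and of its inverse are uniformly bounded (the only $a$-dependence sits in $\Ad a$, whose norm is bounded on $C_c$), there exist $\ell>1$ and $\e_0>0$ independent of $a\in C_c$ such that, for every $0<\e<\e_0$, any $g\in\cO_\e a$ lies in
\[
\Phi_a\bigl((\cO_{\ell\e}\cap K)\times(\cO_{\ell\e}\cap\exp(\la{p}\cap\la{q}))\times(\cO_{\ell\e}\cap\exp(\la{p}\cap\la{h}))\bigr),
\]
which is exactly the asserted inclusion. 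The only mildly technical point is securing these uniform constants; this is routine given the compactness of $C_c$ and the smooth dependence of $\Phi_a$ on $a$, and can be handled either by a parametric inverse function theorem or by covering $C_c$ with finitely many relatively compact charts on each of which a single local inverse is defined with fixed Lipschitz bound.
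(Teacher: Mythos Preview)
Your proposal is correct and is exactly the natural way to fill in the omitted details: the paper simply states the corollary as an immediate consequence of Lemma~\ref{l:sum-conj} and writes \qed, and your three steps (rewriting the direct sum as $\la{g}=\la{k}\oplus(\la{p}\cap\la{q})\oplus\Ad a(\la{p}\cap\la{h})$, computing the differential of the product map, and invoking a uniform inverse function theorem over the compact set $C_c$) are precisely what that omission is meant to cover.
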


\begin{proof}[Proof of Theorem~\ref{th:sw4}]  Let $w\in \cW$ be such that
$waw\inv =b\in A^+$. We set
$$
I=\{\alpha\in\Delta_\sigma:\alpha(\log b)<c/n_0\}\quad\hbox{and}\quad
J=\Delta_\sigma\setminus I.
$$
where $n_0\in\N$ is such that any positive root is a
sum of at most $n_0$ simple roots counted with multiplicity.
We apply Theorem~\ref{th:sw2} to the involution $\sigma_w:=i_w\circ\sigma\circ i_w\inv$
in place of $\sigma$. Since the element $(kw\inv)b(whw\inv)$ is
$(J,c/n_0)$-regular,
\begin{align}\label{eq:1}
  \cO_\e (kah)=&\cO_\e(kw\inv)b(whw\inv)w \\
  \subset&(\cO_{\ell\e}\cap K)(kw\inv) (\cO_{\ell\e}\cap
  M_I)
  (\cO_{\ell\e}\cap A_I)b(\cO_{\ell\e}\cap wHw\inv)(whw\inv)w\nonumber\\
  =& (\cO_{\ell\e} \cap K)k(w\inv \cO_{\ell\e}{w} \cap {w}\inv
  M_Iw)(w\inv \cO_{\ell\e}w \cap w\inv A_I w)a\nonumber\\
&\times (w\inv
  \cO_{\ell\e}w\cap H)h.\nonumber
\end{align}
There exists $\ell_1>1$ such that 
$$
w\inv
\cO_{\ell\e}w\subset\cO_{\ell_1\e}
$$
for all
$0<\e<\e_0$.
Since $M_I$ is $\sigma_w$- and $\theta$-stable, $M_I^w:=w\inv M_I w$
is $\sigma$- and $\theta$-stable and $A=(A\cap M_I^w)(w\inv A_Iw)$.
 Let $a_1\in A\cap M_I^w$ be such that $a\in
a_1(w\inv A_I w\inv)$.  We now apply Corollary~\ref{cor:open-conj} to
$M_I^w$ in place of $G$, and conclude that for some $\ell_2\geq
\ell_1$,
\begin{align}\label{eq:2}
  (\cO_{\ell_1\e}\cap M_I^w)a_1
  \subset& (\cO_{\ell_2\e}\cap K\cap M_I^w)
  (\cO_{\ell_2\e}\cap \exp(\la{p}\cap\la{q}) \cap M_I^w) a_1\\
  &\times (\cO_{\ell_2\e}\cap\exp(\la{p}\cap\la{h})\cap M_I^w).\nonumber
\end{align}
Since $M_I^w$ commutes with $w\inv
A_I w$, combining (\ref{eq:1}) and (\ref{eq:2}), we obtain that for some $\ell_3\geq
\ell_2$
\begin{align}\label{eq:3}
 \cO_\e (kah) \subset
  (\cO_{\ell_3\e}\cap K)k (\cO_{\ell_3\e}\cap
  \exp(\la{p}\cap\la{q})\cap M_I^w) (\cO_{\ell_3\e}\cap w\inv
  A_Iw) a (\cO_{\ell_3\e}\cap H) h.
\end{align}
By the definition of $I$, each eigenvalue of $\ad (\log b)$ on the Lie
algebra of $M_I$ is at most $c$. Hence every eigenvalue of $\ad (\log
a)$ on the Lie algebra of $M_I^w$ is at most $c$. Since $a$ is given
to be $(\tilde J,c)$-regular, we conclude that
\[
M_I^w\cap \exp(\la{p}\cap\la{q})\subset M_I^w\cap
G^{\sigma\theta}\subset M_{\tilde I}.
\]
Therefore, the conclusion of the theorem follows from
(\ref{eq:3}).
\end{proof}

Note that Theorem~\ref{th:sw3} follows from Theorem~\ref{th:sw4}.

\end{document}